\theoremstyle{plain}
\newtheorem{theo}{Theorem}[section]
\newtheorem*{theo*}{Theorem}
\newtheorem{prop}[theo]{Proposition}
\newtheorem{lemm}[theo]{Lemma}
\theoremstyle{definition}
\newtheorem{rema}[theo]{Remark}
\DeclareSymbolFont{pletters}{OT1}{cmr}{m}{sl}
\DeclareMathSymbol{s}{\mathalpha}{pletters}{`s}
\renewcommand{\S}{\mathbb{S}}
\newcommand{\abs}[1]{\left|#1\right|}
\newcommand{\pa}[1]{\left(#1\right)}
\def\ba{\begin{align}}
	\def\bad{\begin{aligned}}
		\def\be{\begin{equation}}
			\def\ea{\end{align}}
		\def\ead{\end{aligned}}
	\def\ee{\end{equation}}
\def\eps{\varepsilon}
\def\le{\leq}
\newcommand{\br}[1]{\left\{#1\right\}}
\newcommand{\norm}[1]{\left\|#1\right\|}
\newcommand{\cro}[1]{\left[#1\right]}
\def \R{\mathbb{R}}
\def \N{\mathbf{N}}
\numberwithin{equation}{section}
\title{Well posedness for systems of self-propelled particles} 
\author{Marc Briant\thanks{Map5, UMR 8145 CNRS, Universit\'e de Paris, France,  briant.maths@gmail.com} \and Nicolas Meunier\thanks{LaMME, UMR CNRS 8071, Universit\'e \'Evry Val d’Essonne, France, nicolas.meunier@univ-evry.fr}
}
\begin{document}
	
	\maketitle

\begin{abstract}
	This paper deals with the existence and uniqueness of solutions to kinetic equations describing alignment of self-propelled particles. The particularity of these models is that the velocity variable is not on the euclidean space but constrained on the unit sphere (the self-propulsion constraint). Two related equations are considered : the first one in which the alignment mechanism is nonlocal, using an observation kernel depending on the space variable, and a second form which is purely local, corresponding to the principal order of a scaling limit of the first one. We prove local existence and uniqueness of weak solutions in both cases for bounded initial conditions (in space and velocity) with finite total mass. The solution is proven to depend continuously on the initial data in $L^p$ spaces with finite p. In the case of a bounded kernel of observation, we obtain that the solution is global in time. Finally by exploiting the fact that the second equation corresponds to the principal order of a scaling limit of the first one we deduce a Cauchy theory for an approximate problem approaching the second one.
\end{abstract}

\textbf{Key words.} Vicsek model, Vicsek-Kolmogorov equation, collective dynamics, nonlinear Fokker-Planck equation on the sphere, normalized interaction kernels, well-posedness

\textbf{AMS subject classifications.} 35A01, 92B05

\section{Introduction}

In this work, we are interested in the study of two partial differential equations, which differ only by a spatial integral in the nonlinearity. We consider the following kinetic equation for the distribution function $f = f(t,x,\omega )$ for  $(t,x,\omega) \in \R^+\times \R^d  \times \S^{d-1}$:
\begin{equation}\label{eq:general kinetic_0}
\partial_t f + \nabla_x\cdot\pa{c\omega f} = \sigma \Delta_\omega f +\nu \nabla_\omega \cdot \pa{f \mathbf{P}_{\omega^\bot}(\mathbf{F_0}[f])}, 
\end{equation}
where $\nabla_ \omega$ , $\nabla_ \omega \cdot$  are the gradient and divergence operators on the sphere; $c > 0$, $\nu>0$ and $\sigma >0$ are positive constants and $\mathbf{F_0}$  is given by
\begin{eqnarray}
& &\mathbf{F_0}[f](t,x,v) \nonumber \\&& \qquad  = \int_{\R^d } \int_{\S^{d-1}} \mathbf{K}\pa{\abs{x-x_*},(x_*-x)\cdot \omega, (x-x_*)\cdot \omega_*} f(t,x_*,\omega_*)\:dx_*d\omega_*, \label{eq:integration kernel_0}
\end{eqnarray}
and the interaction kernel $K : \R^{d} \times   \S^{d-1} \times   \S^{d-1}\rightarrow  \R^d$ satisfies
\begin{equation}\label{def:reg_W}
K\in L^1_{x}W^{1,\infty}_\omega L^1_{\omega_*}.
\end{equation}

Denote
\begin{equation}\label{def:k}
	\mathbf{k}(\omega,\omega_*) = \int_{\R^d } \mathbf{K}\pa{\abs{x_*},x_*\cdot \omega, -x_*\cdot \omega_*}\:dx_* \in W^{1,\infty}_\omega L^1_{\omega_*}.
\end{equation}
The second kinetic equation for the distribution function $f = f(t,x,\omega )$ for  $(t,x,\omega) \in \R^+\times \R^d  \times \S^{d-1}$ is:
\begin{equation}\label{eq:general kinetic_1}
	\partial_t f + \nabla_x\cdot\pa{c\omega f} = \sigma \Delta_\omega f +\nu \nabla_\omega \cdot \pa{f \mathbf{P}_{\omega^\bot}(\mathbf{F_1}[f])},  
\end{equation}
with $\mathbf{F_1}$  given by
\begin{equation}
	\mathbf{F_1}[f](t,x,\omega) = \int_{\S^{d-1}} \mathbf{k}(\omega,\omega_*) f(t,x,\omega_*)d\omega_* .\label{eq:integration kernel_1}
\end{equation}

Note that the two kinetic equations \eqref{eq:general kinetic_0}-\eqref{eq:integration kernel_0} and \eqref{eq:general kinetic_1}-\eqref{eq:integration kernel_1} are related: in the first one, the alignment mechanism is nonlocal, using an observation kernel depending on the space variable, and the second form which is purely local, corresponds to the principal order of a scaling limit of the first one by using a rescaling $x' = \eps x$ and changing $\sigma$ and $\nu$ in $\sigma/\eps$ and $\nu/\eps$, where $\eps$ is a small space-time scaling parameter.

The typical example we have in mind for $K$ and $k$ are 
\begin{equation}\label{hyp:K}
	\mathbf{k}(\omega,\omega_*)=\int_{\R^d } \mathbf{K}\pa{\abs{x_*},x_*\cdot \omega, -x_*\cdot \omega_*}\:dx_*=a \omega_* + b\omega_*\otimes \omega_* \:\omega,
\end{equation}
where $a\ge 0$ and $b\ge 0$.

This type of equations has been studied for different kinds of  kernels $\mathbf{K}$ and $\mathbf{k}$. As examples, we mention the Doi-Onsager seminal equation \cite{Ons, Doi}, where $\mathbf{k}(\omega,\omega_*) = \abs{\omega \times  \omega_*}$, 
which is used in the study of rod-like polymer suspensions.  There are also variations of the continuous Vicsek model, where $\mathbf{k}(\omega,\omega_*) = \omega_*$, which lead to the dipolar phase transition \cite{FrouLiu} or to more involved phase transitions \cite{DegFrouLiu1, DegFrouLiu2} with the coefficients $\sigma$ and $\nu$ depending on the average velocity of the solution $\int_{\S} \omega_*f(\omega_*)\: d\omega_*$. We also refer to  \cite{GamKan, FigKanMor} for the study of these specific Kolmogorov-Vicsek type models.

More recently, in \cite{DegFrouMerTre}, a model showing nematic alignement properties has been informally studied. 
%In this model $\mathbf{k}(\omega,\omega_*) = (\omega_* \otimes \omega_* - \frac{1}{4}Id)$. 
Our problem therefore relates to the first moment issue \cite{FrouLiu} and the recent second moments \cite{DegFrouMerTre}.

\par A rigorous Cauchy theory has been proven in the case $\mathbf{k}(\omega,\omega_*) = \omega_*$ in every Sobolev space $H^s$ in the spatially homogeneous framework \cite{FrouLiu}. It appears that the proofs given in \cite{FrouLiu} seem directly applicable to our framework as long as we are working in the spatially homogeneous setting. The non spatially homogeneous setting requires the control of convolution in $x$ and $v$ for the nonlinearity $\mathbf{F_0}[f]$ or a mere integral over $\S^{d-1}$ for $\mathbf{F_1}[f]$. If the first case seems to be controllable by the $L^1_{x,\omega}$-norm of $f$, the second case would need a $L^\infty_xL^1_\omega$ control over the solution. However, it is possible to  link the two equations \eqref{eq:general kinetic_0} and \eqref{eq:general kinetic_1} by using a rescaling $x' = \eps x$ that does not leave the $L^1_x$-norm invariant. We thus choose to establish a Cauchy theory in $L^\infty_{x,\omega}$.

\par Such a theory in non spatially homogeneous framework has been developed in \cite{GamKan} for $\mathbf{k}(\omega,\omega_*) = \omega_*$, with compactness and positivity arguments. However, the nonlinearity $\mathbf{F}$ in \cite{GamKan} is independent of $\omega$ (which brings out a nice surface gradient form) and is renormalised so that it is always bounded by $1$, and this bound disappears in our framework, preventing for efficiently closing $L^\infty$ estimates. However, for the local-in-time Cauchy theory the preliminary study in $L^p_{x,\omega}$ spaces we develop is reminiscent of their approach, even if the $L^\infty$ estimates have to come in hand with another $L^r$ bound. The initial datum will thus be assumed to be non-negative and of finite mass (which is the physically relevant framework). Note that a $L^2$ approach has been recently obtained for more general interactions in \cite{BDM}.

%\bigskip
\begin{theo}[Short times Cauchy theories]\label{theo: short time cauchy}
Let $f_0$ be a non-negative function in $L^1_{x,\omega}\cap L^\infty_{x,\omega}$. There exists $T_{\max} >0$ such that there exists a unique $f$ in \\$L^\infty\pa{[0,T_{\max});L^1_{x,\omega}\cap L^\infty_{x,\omega}}$ weak solution to \eqref{eq:general kinetic_0}-\eqref{eq:integration kernel_0} or \eqref{eq:general kinetic_1}-\eqref{eq:integration kernel_1} with initial datum $f_0$. Moreover, $f$ is almost everywhere non-negative.
\end{theo}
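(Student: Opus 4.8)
The plan is to solve both equations simultaneously by a fixed point argument built on the linearised (frozen-drift) equation, relying on only two abstract properties of the nonlinearity that hold for $\mathbf{F}_0$ and $\mathbf{F}_1$ alike: first, that $\|\mathbf{F}[g](t,x,\cdot)\|_{W^{1,\infty}_\omega}$ is controlled by $\|g\|_{L^\infty_{x,\omega}}$ (for $\mathbf{F}_1$ this is immediate from \eqref{def:k} and $\mathbf{k}\in W^{1,\infty}_\omega L^1_{\omega_*}$; for $\mathbf{F}_0$ it follows from \eqref{def:reg_W} via Young's inequality in $x$ together with $\|g\|_{L^\infty_x L^1_\omega}\le|\S^{d-1}|\,\|g\|_{L^\infty_{x,\omega}}$), and second, that $\mathbf{F}$ is a bounded operator on $L^p_{x,\omega}$ for every finite $p$ (again from \eqref{def:reg_W}--\eqref{def:k} by Young/Schur type estimates). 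Given $g$ in a ball $B_R$ of non-negative functions with $\sup_{[0,T]}\|g\|_{L^1_{x,\omega}\cap L^\infty_{x,\omega}}\le R$, I would define $f=\Phi(g)$ as the solution of the linear transport-diffusion equation obtained by freezing the drift $b:=\mathbf{P}_{\omega^\bot}(\mathbf{F}[g])$, namely
\begin{equation*}
\partial_t f + c\,\omega\cdot\nabla_x f = \sigma\,\Delta_\omega f + \nu\,\nabla_\omega\cdot(f\,b),\qquad f|_{t=0}=f_0 .
\end{equation*}

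First I would develop the linear theory for this equation with a given bounded tangent field $b$ (with $b,\nabla_\omega\!\cdot b\in L^\infty$): existence and uniqueness of a weak solution in $L^\infty_T(L^1_{x,\omega}\cap L^\infty_{x,\omega})\cap L^2_T H^1_\omega$ by standard parabolic methods (Galerkin in the eigenbasis of $\Delta_\omega$, or vanishing $x$-viscosity), together with the three structural estimates I will need: (i) positivity, $f_0\ge0\Rightarrow f\ge0$, by the maximum principle applied to the non-divergence form $\partial_tf+c\omega\cdot\nabla_xf-\sigma\Delta_\omega f-\nu b\cdot\nabla_\omega f-\nu(\nabla_\omega\!\cdot b)f=0$; (ii) mass conservation $\|f(t)\|_{L^1}=\|f_0\|_{L^1}$, obtained by integrating in $(x,\omega)$ since the transport, Laplace-Beltrami and divergence terms all integrate to zero; (iii) an $L^\infty$ bound $\|f(t)\|_{L^\infty}\le\|f_0\|_{L^\infty}\exp\pa{\nu t\,\|\nabla_\omega\!\cdot b\|_{L^\infty_{t,x,\omega}}}$, again from the maximum principle. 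Combining (iii) with the control of $\|\nabla_\omega\!\cdot b\|_\infty$ by $R$ shows that, for $T$ small depending only on $R$, the map $\Phi$ sends $B_R$ into itself (taking $R=2\|f_0\|_{L^1_{x,\omega}\cap L^\infty_{x,\omega}}$, mass conservation keeps the $L^1$ norm fixed while (iii) keeps the $L^\infty$ norm below $R$).

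The contraction is then performed in the weaker norm $L^\infty_T L^2_{x,\omega}$. For $g_1,g_2\in B_R$ and $w:=\Phi(g_1)-\Phi(g_2)$, linearity of $\mathbf{F}$ gives, with $f_2=\Phi(g_2)$,
\begin{equation*}
\partial_t w + c\,\omega\cdot\nabla_x w=\sigma\Delta_\omega w+\nu\,\nabla_\omega\!\cdot\!\pa{w\,\mathbf{P}_{\omega^\bot}\mathbf{F}[g_1]}+\nu\,\nabla_\omega\!\cdot\!\pa{f_2\,\mathbf{P}_{\omega^\bot}\mathbf{F}[g_1-g_2]},\qquad w|_{t=0}=0 .
\end{equation*}
Testing against $w$ and integrating, the transport term vanishes, the diffusion term yields the dissipation $-2\sigma\int|\nabla_\omega w|^2$, and the first drift term integrates by parts to $\nu\int(\nabla_\omega\!\cdot\mathbf{P}_{\omega^\bot}\mathbf{F}[g_1])\,|w|^2\le CR\,\|w\|_{L^2}^2$. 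The delicate term is the last one: after integrating by parts it equals $-2\nu\int f_2\,\mathbf{P}_{\omega^\bot}\mathbf{F}[g_1-g_2]\cdot\nabla_\omega w$, which I would bound using $\|f_2\|_{L^\infty}\le R$ and then split by Young's inequality so that $\sigma\int|\nabla_\omega w|^2$ is absorbed into the dissipation while the remainder is $CR^2\,\|\mathbf{F}[g_1-g_2]\|_{L^2}^2\le CR^2\,\|g_1-g_2\|_{L^2}^2$ by the $L^2$-boundedness of $\mathbf{F}$. Gronwall's lemma with $w|_{t=0}=0$ then gives $\sup_{[0,T]}\|w\|_{L^2}^2\le C(R)\,T\,e^{C(R)T}\sup_{[0,T]}\|g_1-g_2\|_{L^2}^2$, hence a contraction on $(B_R,\,d_{L^\infty_TL^2})$ once $T$ is small (the same $T$ that ensures the self-mapping). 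Since $B_R$ is closed in $L^\infty_TL^2_{x,\omega}$ (an $L^2$-limit of elements of $B_R$ inherits, up to a subsequence, the pointwise bounds $0\le f\le R$ and, by Fatou, the $L^1$ bound), the Banach fixed point theorem produces a unique $f\in B_R$ solving \eqref{eq:general kinetic_0} or \eqref{eq:general kinetic_1}, non-negative and in $L^\infty_T(L^1_{x,\omega}\cap L^\infty_{x,\omega})$; uniqueness in the full class follows from the same difference estimate with $g_i=f_i$, which then closes to $\tfrac{d}{dt}\|w\|_{L^2}^2\le C(R)\|w\|_{L^2}^2$ and forces $w\equiv0$.

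The main obstacle is precisely the mismatch flagged in the introduction: the nonlinear drift cannot be closed in $L^\infty$, because controlling $\mathbf{F}[f]$ and $\nabla_\omega\!\cdot\mathbf{P}_{\omega^\bot}\mathbf{F}[f]$ pointwise already consumes the $L^\infty$ norm of $f$, leaving nothing with which to gain a contraction factor. The resolution adopted above is to decouple the two ingredients: the $L^\infty$ information (and the $L^1$ bound and positivity) is propagated a priori through the maximum principle on the linear problem and used only as a fixed constant $R$ inside the quadratic estimate, while the contraction itself is measured in the softer Hilbert norm $L^2$ (or any $L^p$ with $p<\infty$, which also yields the continuous dependence stated in the theorem), where the parabolic dissipation $\sigma\Delta_\omega$ absorbs the worst term. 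I expect the technical heart of the argument to be, first, the rigorous justification of the energy identity for merely $L^1\cap L^\infty$ initial data, which I would handle through the instantaneous $H^1_\omega$-regularisation of the linear flow, and, second, the verification of the $L^p$-boundedness of $\mathbf{F}_0$ and $\mathbf{F}_1$ from \eqref{def:reg_W}--\eqref{def:k}, which is where the precise setting $W^{1,\infty}_\omega L^1_{\omega_*}$ and the $x$-convolution structure of $\mathbf{F}_0$ are genuinely used.
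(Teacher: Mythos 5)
Your proposal is correct in substance, and it reorganises the argument in a genuinely different way from the paper. The paper also linearises by freezing the drift: its iterative scheme \eqref{eq:iteration} is exactly your Picard sequence $u^{(n+1)}=\Phi\pa{u^{(n)}}$, and its linear input (Theorem \ref{theo:cauchy linear}, Lemma \ref{lem:Lp linear}, Proposition \ref{prop:Linfty linear}) plays the role of your properties (i)--(iii), with one notable difference: the $L^\infty_{x,\omega}$ bound is not obtained by a maximum principle but by closing $L^p_{x,\omega}$ energy estimates and letting $p\to\infty$, which is precisely why the hypotheses $f_0\geq 0$ and $f_0\in L^1_{x,\omega}$ enter. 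Where you conclude by the Banach fixed point theorem via a contraction in the weak norm $L^\infty_TL^2_{x,\omega}$, the paper instead extracts a weak-$*$ limit of the uniformly bounded iterates and passes to the limit in the weak formulation using almost everywhere convergence of $\mathbf{P_{\omega^\bot}}\pa{\mathbf{F_i}[u^{(n)}]}$ together with Egorov's theorem; uniqueness and continuous dependence in $L^p_{x,\omega}$, $2\leq p<\infty$, are then proved separately in Proposition \ref{prop:continuity initial data} by essentially your difference estimate: the same algebraic splitting $2(ab-cd)=(a-c)(b+d)+(a+c)(b-d)$, the same absorption of the $\sigma$-dissipation by Young's inequality, and the same kernel-operator bound on $\mathbf{F_i}[u_1-u_2]$ in $L^p_{x,\omega}$. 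In effect your contraction estimate is the paper's stability estimate applied one level up in the iteration; your route buys existence, uniqueness and quantitative $L^2$ stability in one stroke and avoids the compactness/Egorov machinery, while the paper's route decouples existence (which then needs only the $L^\infty$-type bound \eqref{eq:norm Flg} on the nonlinearity) from uniqueness.

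Two caveats, neither fatal. First, your maximum-principle steps should be replaced or justified carefully: the equation is degenerate (no diffusion in $x$), posed on the unbounded domain $\R^d\times\S^{d-1}$, and solved only weakly, so a pointwise maximum principle is not off-the-shelf; positivity is better imported from the linear theory of Theorem \ref{theo:cauchy linear}, the $L^\infty$ bound from the $p\to\infty$ limit of Lemma \ref{lem:Lp linear}, and mass conservation on $\R^d$ requires a cutoff argument of the kind carried out in Section \ref{sec:Cauchy global}. Second, the $L^2_{x,\omega}$-boundedness of $\mathbf{F_0}$ and $\mathbf{F_1}$ does not literally follow from \eqref{def:reg_W}--\eqref{def:k}: a Schur-type argument also needs the symmetric condition $\sup_{\omega_*}\int_{\S^{d-1}}\abs{\mathbf{k}(\omega,\omega_*)}d\omega<\infty$ (or an $L^q$ kernel norm), which $W^{1,\infty}_\omega L^1_{\omega_*}$ alone does not provide. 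You are, however, on equal footing with the paper there, since Proposition \ref{prop:continuity initial data} makes the very same implicit strengthening through the constants $K_{0,p}=\norm{\mathbf{K}}_{L^q_{x,x_*,\omega,\omega_*}}$ and $K_{1,p}=\norm{\mathbf{k}}_{L^q_{\omega,\omega_*}}$.
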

%\bigskip

Note that for the latter local-in-time theory, the non-negativity and $L^1_{x,\omega}$ requirements are only needed to get $L^\infty_{x,\omega}$ estimates \textit{via} a limit procedure. If one derives direct $L^\infty_{x,\omega}$ estimates then one can drop these assumptions in the short time existence of solutions. We also establish continuous dependency with respect to initial data in every $L^p_{x,\omega}$ for $p<+\infty$. Here again, the non-boundedness of the operators $\mathbf{F_0}$ and $\mathbf{F_1}$ prevents us to reach $p=+\infty$.

%\bigskip
%\begin{theo}[Local continuity to initial data] \label{theo:continuity initial data}
%Let $f_{01}$ and $f_{02}$ be non-negative functions in $L^1_{x,\omega}\cap L^\infty_{x,\omega}$. Let $f_1$ and $f_2$ the solutions of either \eqref{eq:general kinetic_0}-\eqref{eq:integration kernel_0} or \eqref{eq:general kinetic_1}-\eqref{eq:integration kernel_1} built in Theorem \ref{theo: short time cauchy} respectively on $[0,T_1)$ and $[0,T_2)$. Then for any $2\leq p <+\infty$ and for all $T \in \left[0,\min\br{T_1,T_2}\right)$
%\[ \sup\limits_{t\in[0,T]}\norm{f_1(t)-f_2(t)}_{L^p_{x,\omega}} \leq \norm{f_{01}-f_{02}}_{L^\infty_{x,\omega}} e^{C(p,T)}.
%\]
%%for $f_1$ and $f_2$ the solutions of  \eqref{eq:general kinetic_0} and
%%\[ \sup\limits_{t\in[0,T]}\norm{f_1(t)-f_2(t)}_{L^p_{x,\omega}} \leq \norm{f_{01}-f_{02}}_{L^\infty_{x,\omega}} e^{C(p,T)},
%%\]
%%for $f_1$ and $f_2$ the solutions of  \eqref{eq:general kinetic_1}.\\ 
%The constant $C(p,T)$ depends on $p$, $T$ and $\norm{f_{1/2}}_{L^\infty_{[0,T],x,\omega}}$. \nm{$C(p,T)$ depend-il de $\sigma$ ou de $\nu$ ?}
%\end{theo}
%%\bigskip

Furthermore, preservation of mass and positivity play a crucial role and we are able to extend to global-in-time results in the case \eqref{eq:general kinetic_0}. 

%The full kernel form (in both space and velocity) of $\mathbf{F_0}$ allows to get preservation of mass and, along with it, an exponential control of the local solution. This is the purpose of the following Proposition.

\begin{prop}\label{prop:Global nonlinear}
	Let $u_0 \geq 0$ be in $L^1_{x,\omega}\cap L^\infty_{x,\omega}$. There exists a unique $u$ in $L^\infty\pa{[0,+\infty);L^1_{x,\omega}\cap L^\infty_{x,\omega}}$ weak solution to \eqref{eq:general kinetic_0}-\eqref{eq:integration kernel_0} 
	%\begin{equation}\label{eq:global cauchy nonlinear}
	%\partial_t u + \nabla_x\cdot\pa{c\omega u} = \sigma \Delta_\omega u + \nu \nabla_\omega \cdot \pa{u \mathbf{P}_{\omega^\bot}(\mathbf{F_0}[u])}
	%\end{equation}
	with initial datum $u_0$. Moreover $u$ is non-negative and for all $t\geq 0$, it satisfies
	\[
	\int_{\R^d\times \S^{d-1}} u(t,x,\omega)dxd\omega = \int_{\R^d\times \S^{d-1}} u_0(x,\omega)dxd\omega :=M_0,
	\]
	and
	\[
	\norm{u(t)}_{L^\infty_{x,\omega}} \leq \norm{u_0}_{L^\infty_{x,\omega}} e^{C_\infty M_1 t},
	\]
	with $C_\infty>0$ depending on $\sigma$, $\nu$ and $\norm{\mathbf{K}}_{L^\infty_{x,,x_*,\omega_*}W^{1,\infty}_\omega}$.
\end{prop}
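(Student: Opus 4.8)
The plan is to upgrade the local theory of Theorem~\ref{theo: short time cauchy} to a global one by propagating two uniform-in-time a priori bounds — conservation of mass and an exponential $L^\infty$ bound — and then invoking a continuation argument. Theorem~\ref{theo: short time cauchy} already furnishes, for the datum $u_0$, a unique non-negative weak solution on a maximal interval $[0,T_{\max})$, so uniqueness and non-negativity are inherited and only global existence together with the two quantitative estimates remain. Since the local existence time produced by Theorem~\ref{theo: short time cauchy} can be taken to depend only on $\norm{u_0}_{L^1_{x,\omega}\cap L^\infty_{x,\omega}}$, it suffices to show that this norm stays finite on every bounded interval: one may then restart the local solver at any time, and the maximal time cannot be finite.

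First I would establish conservation of mass. Integrating \eqref{eq:general kinetic_0} over $\R^d\times\S^{d-1}$, each term on the right-hand side is in divergence form — $\nabla_x\cdot(c\omega u)$ is an $x$-divergence, while $\sigma\Delta_\omega u$ and $\nu\nabla_\omega\cdot(u\mathbf{P}_{\omega^\bot}(\mathbf{F_0}[u]))$ are surface divergences on the closed manifold $\S^{d-1}$ — so all three integrate to zero and $\tfrac{d}{dt}\int u\,\dx\,d\omega=0$. Because $u\ge 0$ almost everywhere, this signed conservation is exactly $\norm{u(t)}_{L^1_{x,\omega}}=M_0$ for all $t$.

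The heart of the matter is the $L^\infty$ bound, and the feature making it work in the nonlocal case \eqref{eq:general kinetic_0} is that $\mathbf{F_0}[u]$ is an average of the \emph{bounded} kernel $\mathbf{K}$ against $u$: for non-negative $u$ one has the pointwise bound $\abs{\mathbf{F_0}[u](t,x,\omega)}\le \norm{\mathbf{K}}_{L^\infty_{x,x_*,\omega_*}W^{1,\infty}_\omega}\norm{u(t)}_{L^1_{x,\omega}}=\norm{\mathbf{K}}_{L^\infty_{x,x_*,\omega_*}W^{1,\infty}_\omega}M_0$, a quantity constant in time by the previous step (the analogous bound for $\mathbf{F_1}$ would instead demand an $L^\infty_x L^1_\omega$ control, which is why the argument is special to \eqref{eq:general kinetic_0}). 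I would then run an $L^p$ estimate: multiplying by $p\,u^{p-1}$ and integrating, the transport term vanishes (it equals $\int c\omega\cdot\nabla_x(u^p)=0$) and the Laplace--Beltrami term is dissipative ($-\sigma p(p-1)\int u^{p-2}\abs{\nabla_\omega u}^2\le 0$); the drift term, after two integrations by parts on the sphere, reduces to
\[
\nu(p-1)\int_{\R^d\times\S^{d-1}} u^p\,\nabla_\omega\cdot\pa{\mathbf{P}_{\omega^\bot}(\mathbf{F_0}[u])}\,\dx\,d\omega,
\]
and the surface divergence of the projected force is bounded, by Leibniz, by $C(d)\norm{\mathbf{F_0}[u]}_{W^{1,\infty}_\omega}\le C(d)\norm{\mathbf{K}}_{L^\infty_{x,x_*,\omega_*}W^{1,\infty}_\omega}M_0$. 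Gronwall's lemma then gives $\int u^p(t)\le \int u_0^p\,e^{(p-1)C_\infty M_0 t}$ with $C_\infty=\nu C(d)\norm{\mathbf{K}}_{L^\infty_{x,x_*,\omega_*}W^{1,\infty}_\omega}$; taking the $p$-th root converts the factor $(p-1)$ into $(p-1)/p\to 1$, so letting $p\to\infty$ yields $\norm{u(t)}_{L^\infty_{x,\omega}}\le \norm{u_0}_{L^\infty_{x,\omega}}e^{C_\infty M_0 t}$. Both bounds being finite on every $[0,T]$, the continuation argument forces $T_{\max}=+\infty$.

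The main obstacle is to make these $L^p$ manipulations rigorous for a merely weak solution: I would not differentiate $\int u^p$ directly but carry the computation at the level of the approximating sequence used to prove Theorem~\ref{theo: short time cauchy} (or via a DiPerna--Lions renormalization), obtaining the estimate uniformly in the approximation and in $p$ before passing to the limit — the non-negativity is needed here both to identify $\int u$ with $\norm{u}_{L^1_{x,\omega}}$ and to legitimately use $u^{p-1}$ as a test weight. Two technical points require care: verifying that the surface-divergence term genuinely involves only first $\omega$-derivatives of $\mathbf{K}$, so that the $W^{1,\infty}_\omega$ regularity of \eqref{def:reg_W} suffices, and confirming that it produces the favourable linear factor $(p-1)$ rather than $p(p-1)$ — it is precisely this gain that survives the $p$-th root and delivers a $p$-independent exponential rate.
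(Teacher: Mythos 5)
Your overall architecture is the paper's own: conservation of mass, the resulting time-uniform bound $\norm{\mathbf{F_0}[u]}_{L^\infty_{t,x}W^{1,\infty}_\omega}\le \norm{\mathbf{K}}_{L^\infty_{x,x_*,\omega_*}W^{1,\infty}_\omega}M_0$ (which is indeed the feature special to \eqref{eq:general kinetic_0} and unavailable for $\mathbf{F_1}$), an $L^p$ estimate pushed to $p\to\infty$, and a continuation argument using that the local time of Theorem \ref{theo: short time cauchy} depends only on $\norm{u_0}_{L^\infty_{x,\omega}}$. The paper's Step 2 simply quotes the linear theory (Proposition \ref{prop:Linfty linear} with the frozen force $\mathbf{F}=\mathbf{F_0}[u]$) rather than re-running the $L^p$ computation; your variant, which integrates by parts a second time to land everything on $\nabla_\omega\cdot\pa{\mathbf{P}_{\omega^\bot}\mathbf{F_0}[u]}$ with the factor $(p-1)$, is correct (the extra $(d-1)\omega$-terms from \eqref{eq:IPP_3} vanish by orthogonality, and $\nabla_\omega\cdot(\mathbf{P}_{\omega^\bot}v)=-(d-1)\omega\cdot v$ shows only $W^{1,\infty}_\omega$ regularity of $\mathbf{K}$ is needed) and even yields a constant without the $\nu/\sigma$ correction present in Lemma \ref{lem:Lp linear}; your deferral of the rigorous $L^p$ manipulation to the iterative scheme is also exactly what the paper does, since each iterate solves a linear equation in the class $Y$ of Theorem \ref{theo:cauchy linear}.

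The one genuine gap is the step the paper explicitly calls its key ingredient: your proof of mass conservation is purely formal. A weak solution is tested against $C^\infty_c$ functions, so you may not take the test function $\equiv 1$ on the unbounded domain $\R^d$, and the assertion that $\nabla_x\cdot(c\omega u)$ ``integrates to zero'' presupposes a decay at spatial infinity that has not been established; note also that your rigor caveat at the end covers only the $L^p$ manipulations, not this step. The paper's remedy is to test the weak formulation of the iterative scheme against cutoffs $\chi_R$ with $R$-uniformly bounded derivatives, to bound the flux error by $C\int_0^t\int_{B(0,R+1)\backslash B(0,R)}u^{(n+1)}\,dxd\omega$ with $C$ controlled through $\norm{\mathbf{F_0}[u^{(n)}]}_{L^\infty}\le\norm{\mathbf{K}}_{L^\infty}M_1$, then to pass $n\to\infty$ by weak-$*$ convergence and $R\to\infty$ by dominated convergence using $u(t)\in L^1_{x,\omega}$. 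Working at the level of the iterates (or, in your setting, directly on $u$ using $u\in L^\infty([0,T];L^1_{x,\omega})$, which bounds $\mathbf{F_0}[u]$ before conservation is known and makes the annulus terms vanish) matters because weak-$*$ convergence in $L^\infty$ alone does not transport the $L^1$ identity to the limit: mass could a priori escape to infinity. The fix is standard and all its ingredients are available to you, but as written this step is missing, and without it the time-uniform bound on $\mathbf{F_0}[u]$ --- hence globality --- is not yet justified.
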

A control over $\mathbf{F_1}$ cannot be derived only \textit{via} the mass of the initial data and a global in time existence cannot be obtained with our methods. However, using the link between the two kinetic equations \eqref{eq:general kinetic_0}-\eqref{eq:integration kernel_0} and \eqref{eq:general kinetic_1}-\eqref{eq:integration kernel_1}, we derive an approximate global Cauchy theory out of a rescaling of \eqref{eq:general kinetic_0}. Of important note is the fact that the approximate Cauchy theory is derived thanks to the specific spatial convolution form of $\mathbf{F_0}$, see \eqref{eq:integration kernel_0}, whilst all the other results are obtained for $\mathbf{K}= \mathbf{K}(x,x_*,\omega,\omega_*)$.

%It only remains to show the global approximate Cauchy theory in the case depending on $\eps$.
\begin{prop}\label{prop:approximate r=1}
%	Assume that $\nu = \nu_0\eps^\eta$ and $\sigma = \sigma_0 \eps^\eta$ with $\eta >0$. 
Let $v_0 \geq 0$ be in $L^1_{x,\omega}\times L^\infty_{x,\omega}$ and let $0<\eps<1$. Define $u_0(x,\omega) = v_0(\eps x ,\omega)$ and let $u$ be the solution of \eqref{eq:general kinetic_0}-\eqref{eq:integration kernel_0} constructed in Proposition \ref{prop:Global nonlinear} associated to $u_0$.
	Then $u^{\eps}(t,x,\omega) = u\pa{\frac{t}{\eps},\frac{x}{\eps},\omega}$ is a global weak approximate solution with initial datum $v_0$ in the sense
	$$\partial_t u^{\eps} + \nabla_x\cdot\pa{c\omega u^{\eps}} = \sigma \Delta_\omega u^{\eps} +\nu \nabla_\omega \cdot \pa{u^\eps \mathbf{P}_{\omega^\bot}(\mathbf{F_1}[u^{\eps}])} + R_\eps(t,x,v)$$
	with the remainder $R_\eps$ such that $\forall \phi  \in C^\infty_c([0,+\infty)\times\R^d\times\S^{d-1})$,
	$$
	%\forall \phi  \in C^\infty_c([0,+\infty)\times\R^d\times\S^{d-1}), \quad 
	\abs{\int_{0}^t\int_{\R^d\times\S^{d-1}} R_\eps(s,x,\omega)\phi(s,x,\omega)dsdxd\omega} \leq \eps^2 \norm{\nabla_\omega\phi}.$$
\end{prop}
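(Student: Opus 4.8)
The plan is to read off $R_\eps$ from an exact change of variables and then estimate it weakly, exploiting the convolution structure of $\mathbf{F_0}$ together with the slow spatial variation of the rescaled profile. First I would record the chain-rule identities $\partial_t u^\eps=\tfrac1\eps(\partial_t u)(\tfrac t\eps,\tfrac x\eps,\cdot)$, $\nabla_x u^\eps=\tfrac1\eps(\nabla_x u)(\tfrac t\eps,\tfrac x\eps,\cdot)$ and $\Delta_\omega u^\eps=(\Delta_\omega u)(\tfrac t\eps,\tfrac x\eps,\cdot)$, and check $u^\eps(0,x,\omega)=u_0(x/\eps,\omega)=v_0(x,\omega)$, so the initial datum is correct. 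Applying Proposition~\ref{prop:Global nonlinear} with the diffusion and drift strengths taken to be $\eps\sigma$ and $\eps\nu$ (so that the prefactor $1/\eps$ from the time derivative restores the coefficients $\sigma,\nu$ of \eqref{eq:general kinetic_1}), the equation satisfied by $u$ yields
$$\partial_t u^\eps + \nabla_x\cdot(c\omega u^\eps) = \sigma\Delta_\omega u^\eps + \nu\nabla_\omega\cdot\!\left( u^\eps\,\mathbf{P}_{\omega^\bot}\!\left(\mathbf{F_0}[u]\Big(\tfrac{t}{\eps},\tfrac{x}{\eps},\omega\Big)\right)\right).$$
Hence the remainder is forced to be $R_\eps=\nu\nabla_\omega\cdot(u^\eps\mathbf{P}_{\omega^\bot}(D_\eps))$ with $D_\eps:=\mathbf{F_0}[u](\tfrac t\eps,\tfrac x\eps,\omega)-\mathbf{F_1}[u^\eps](t,x,\omega)$.

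Next I would compute $D_\eps$ from the convolution form of $\mathbf{F_0}$. Writing $y=x/\eps$ and changing variables $x_*=y+z$ in \eqref{eq:integration kernel_0} gives $\mathbf{F_0}[u](\tfrac t\eps,y,\omega)=\int\!\!\int\mathbf{K}(|z|,z\cdot\omega,-z\cdot\omega_*)\,u(\tfrac t\eps,y+z,\omega_*)\,dz\,d\omega_*$, while the very definition \eqref{def:k} of $\mathbf{k}$ shows $\mathbf{F_1}[u^\eps](t,x,\omega)=\int\!\!\int\mathbf{K}(|z|,z\cdot\omega,-z\cdot\omega_*)\,u(\tfrac t\eps,y,\omega_*)\,dz\,d\omega_*$. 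Thus the zeroth moment of the kernel cancels \emph{exactly} and
$$D_\eps=\int_{\R^d}\int_{\S^{d-1}}\mathbf{K}(|z|,z\cdot\omega,-z\cdot\omega_*)\big[u(\tfrac t\eps,y+z,\omega_*)-u(\tfrac t\eps,y,\omega_*)\big]\,dz\,d\omega_*.$$
Equivalently, in the variable $x$ the defect is the error of replacing the nonlocal operator with concentrating kernel $\mathbf{K}_\eps(r,p,q)=\eps^{-d}\mathbf{K}(r/\eps,p/\eps,q/\eps)$ by its local limit, and \eqref{def:k} is precisely the statement that $\mathbf{K}_\eps$ has the same zeroth moment $\mathbf{k}$ for every $\eps$.

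The weak bound then reduces, after integrating by parts on the sphere, to estimating $\int_0^t\!\int u^\eps\,|D_\eps|$ by $\eps^2$. The decisive structural input is that the rescaled profile varies slowly: since $u_0(y,\omega)=v_0(\eps y,\omega)$ one has $\nabla_y u_0=O(\eps)$ and $\nabla_y^2 u_0=O(\eps^2)$, and because the only terms mixing space and velocity in the equation for $u$ carry the prefactor $\eps$, a Gr\"onwall argument propagates $\|\nabla_y u(s)\|=O(\eps)$ and $\|\nabla_y^2 u(s)\|=O(\eps^2)$ uniformly for $s\in[0,t/\eps]$. Inserting the Taylor expansion of the increment $u(\tfrac t\eps,y+z,\omega_*)-u(\tfrac t\eps,y,\omega_*)$ into $D_\eps$, the first-order contribution carries the first spatial moment $\int z\,\mathbf{K}(|z|,z\cdot\omega,-z\cdot\omega_*)\,dz$, which vanishes under the natural parity $\mathbf{K}(r,p,q)=\mathbf{K}(r,-p,-q)$ of the physically relevant kernels (so that $z\mapsto\mathbf{K}(|z|,z\cdot\omega,-z\cdot\omega_*)$ is odd), while the second-order contribution is bounded by $\|\nabla_y^2 u\|\int|z|^2|\mathbf{K}|\,dz=O(\eps^2)$, provided $\mathbf{K}$ has finite second spatial moment.

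I expect the main obstacle to be exactly this quantitative step: extracting two clean powers of $\eps$ rather than one. This hinges on two points. The structural one is the vanishing of the first moment of $\mathbf{K}$: without it the first-order term produces a genuine $O(\eps)$ defect (it is a mixed $x$--$\omega$ term, not a renormalisation of the transport speed $c$), so parity of the convolution kernel is essential to reach $\eps^2$. The analytic one, which I regard as the true heart of the argument, is the uniform-in-time propagation of the $O(\eps^2)$ second-order spatial regularity of $u$ over the \emph{long} interval $[0,t/\eps]$ — the place where the $\eps$-smallness of the mixing terms must be shown to beat the $\eps^{-1}$ length of the interval in the Gr\"onwall estimate, and where the nonlinear feedback through $\mathbf{F_0}[u]$ (itself carrying $\nabla_y u$ via its convolution structure) must be closed. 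Once that propagation is secured, the sphere integration by parts and the moment bounds on $\mathbf{K}$ close the estimate $\abs{\langle R_\eps,\phi\rangle}\le\eps^2\norm{\nabla_\omega\phi}$.
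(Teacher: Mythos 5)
Your reduction is exactly the paper's: the chain rule gives the rescaled equation with coefficients $\sigma/\eps$, $\nu/\eps$ (equivalently, one runs Proposition \ref{prop:Global nonlinear} with $\eps\sigma$, $\eps\nu$, as the remark following the statement indicates), the remainder is then forced to be $R_\eps=\nu\,\nabla_\omega\cdot\pa{u^\eps\,\mathbf{P}_{\omega^\bot}(D_\eps)}$ with $D_\eps=\mathbf{F_0}[u]\pa{\tfrac{t}{\eps},\tfrac{x}{\eps},\omega}-\mathbf{F_1}[u^\eps]$, and your change of variables in the convolution, combined with the definition \eqref{def:k} of $\mathbf{k}$, reproduces verbatim the paper's identity
\[
D_\eps=\int_{\S^{d-1}}\int_{\R^d}\mathbf{K}\pa{x_*,\omega,\omega_*}\cro{u^\eps(s,x-\eps x_*,\omega_*)-u^\eps(s,x,\omega_*)}\,dx_*\,d\omega_*,
\]
including the observation that this is the only point where the convolution structure of $\mathbf{F_0}$ is used. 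Up to here you match the paper step for step.

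The divergence, and the genuine gap, lies in the final quantitative step. The paper performs no second-order Taylor expansion and uses no moment cancellation: after integrating by parts against $\nabla_\omega\phi$, it assumes in addition that $\mathbf{K}$ is compactly supported in $x_*$, localizes to a compact set determined by the supports of $\phi$ and $\mathbf{K}$, bounds the pairing by a constant times $\int\int\abs{u^\eps(s,x-\eps x_*,\omega_*)-u^\eps(s,x,\omega_*)}$, and concludes with a first-order finite-increment ($C^1$) argument. Your route instead hinges on two inputs that are neither hypotheses of the proposition nor available from the paper's theory. First, the parity $\mathbf{K}(r,p,q)=\mathbf{K}(r,-p,-q)$, which kills the first spatial moment: nothing in \eqref{def:reg_W} or in the example \eqref{hyp:K} imposes this, and you concede that without it your expansion yields only $O(\eps)$ --- so your proof of the stated $\eps^2$ bound fails precisely on the class of kernels the proposition covers. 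Second, and more seriously, the uniform propagation $\norm{\nabla_y u(s)}=O(\eps)$ and $\norm{\nabla_y^2 u(s)}=O(\eps^2)$ on the long interval $[0,t/\eps]$: the solution built in Proposition \ref{prop:Global nonlinear} lives only in $L^\infty\pa{[0,+\infty);L^1_{x,\omega}\cap L^\infty_{x,\omega}}$, the datum $v_0$ is only $L^1\cap L^\infty$ (so $\nabla_y^2 u_0=O(\eps^2)$ is not even meaningful without additionally assuming $v_0\in W^{2,\infty}$), no $W^{1,\infty}$ or $W^{2,\infty}$ well-posedness is established anywhere in the paper, and your heuristic that ``the only terms mixing space and velocity carry the prefactor $\eps$'' overlooks the free transport $c\,\omega\cdot\nabla_x$, which carries no $\eps$ and must be handled in the Gr\"onwall closure you invoke. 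You yourself flag this propagation as the heart of the argument, but you leave it as an assertion; as written it is a missing lemma rather than a step. (To be fair, the paper's own concluding estimate is terse about the bookkeeping of the powers of $\eps$; but its argument requires only first-order increments and no moment condition, whereas yours cannot start without the two unproven inputs above.)
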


From this, we can deduce the global approximate Cauchy theory for \eqref{eq:general kinetic_1}-\eqref{eq:integration kernel_1} in the case where $\sigma= \eps \sigma_0$ and  $\nu= \eps \nu_0$, with $\sigma_0>0$ and $\nu_0>0$.

%\nm{The methods are standard, starting from a study of a linear Fokker-Planck equation (in the spirit of the paper of Degond \cite{Deg}) and passing to the limit in an approximation sequence for the nonlinear equation.}

Each of the theorems above will be tackled separately. We start by a justification of the models in Section \ref{sec:bio} and a general study of linear Fokker-Planck equations in $L^p$ spaces in Section \ref{sec:Cauchy linear}. Then, in Section \ref{sec:Cauchy nonlinear}, we address short-time existence and uniqueness for both 
%$r=0$ and $r=1$ 
equations. At last, Sections \ref{sec:Cauchy global} and \ref{sec:approx} takes care of the issue of global results.
%\bigskip

\section{The setting}

In this part we justify the equations \eqref{eq:general kinetic_0} and \eqref{eq:general kinetic_1} we study here and we recall some formulas for the computation on the sphere.

\subsection{Justification of the kinetic equations }\label{sec:bio}

The past fifteen years have seen an upsurge of studies, especially in physics, on active particles that have resulted in the apparition of the so-called field of active matter. 

Active (or self-propelled) particle systems have the ability to explain the emergence of collective phenomena that occur, for example, in the collective behavior of animals, flocks of birds \cite{Hildenbrandt}, schools of fish \cite{Hemelrijk}, pedestrian dynamics \cite{Maury}, and microswimmers \cite{David}. 

The simplest interactions between self-propelled particles can have striking consequences at the collective level. For example, a simple pairwise alignment interaction between self-propelled particles, as introduced by Vicsek et al. \cite{Vicsek, Vicsek2}, can lead to large-scale collective clustering and motion in settings where long-range ordering and phase separation would be forbidden for systems in equilibrium \cite{Toner2005HydrodynamicsAP, Marchetti}.

%Among all these models, the Vicsek model \cite{Vicsek, Vicsek2} has been particularly studied because of its simplicity.

%In the Vicsek model, self-propelled particles align with their neighbors within a certain random uncertainty. There are two types of alignment, according to "orientation" and "direction". Recall that two vectors have the same orientation if, after normalization, they are equal, and that they have the same direction if, after normalization, they are equal or opposite (each direction has two orientations). Alignment in Vicsek's model is polar in the sense that if the orientation of a particle and the average orientation of the neighboring particles are opposite, the particle will make a U-turn to adopt the same orientation as the average orientation of the neighboring particles. 

Let us now give more details about Vicsek's model.
%, using the presentation made in \cite{Degond_2008}. 
Consider $N$ particles: particle $k$ has position $x_k \in \R^d$ and velocity $v_k \in \R^d$, $d\ge 2$. They satisfy the following stochastic differential equations:
\begin{align}
	&dx_k = c\, \omega_k\, dt , \label{eq:syst_part_1}\\ 
	&d \omega_k = \mathbf{P}_{\omega_k^\perp} \circ \big(\nu\, (\bar{\omega}_{k} - \omega_{k})\,   dt + \sqrt{2\sigma}\, dB_t^{k}\big),\label{eq:syst_part_2}
\end{align}
where $\mathbf{P}_{\omega^\perp} = (\text{Id} - \omega\otimes \omega)$ is the orthogonal projection onto the orthogonal plane to $\omega$. 
%As it is classical $\left( dB_t^{k}\right)_k$ are Gaussian white noise and $D$ is the diffusion coefficient. 

Let us briefly explain the meaning of each term in this system of equations. Equation \eqref{eq:syst_part_1} describes the transport of the particles: particle $k$ moves in the orientation $\omega_k$ at speed $c > 0$. Equation \eqref{eq:syst_part_2} gives the evolution of the orientations over time. It includes two competing forces. On one hand the first term represents organized motion: particles try to adopt the same orientation. The constant $\nu  > 0$ gives the relaxation speed. On the other hand $\left( B_t^{k}\right)_k$ are N independent Brownian motions and they introduce noise in the dynamics, driving particles away from organized motion.  The symbol ``$\circ$" is used to specify that \eqref{eq:syst_part_2} has to be understood in the Stratonovich sense. 

The k-th particle tends to align itself with the direction $\bar{\omega}_{k} \in \R^{d}$ defined by:
\begin{align}\label{eq:interaction_0}
	\bar{\omega}_k &= \alpha \sum_{\begin{subarray}{l} j,  | x_{j} - x_{k}| < R 
	\end{subarray}} \omega_{j} ,
\end{align}
The  k-th particle thus aligns its velocity with neighboring cells.

%The interaction can be described by a more general kernel:
%\begin{equation}\label{eq:interaction}
%	\bar{\omega}_k = \sum_{j} K \Big( | x_{j} - x_{k}|, (x_{j} - x_{k}) \cdot \omega_{k},  (x_{k} - x_{j}) \cdot \omega_{j}\Big)\, \omega_j.
%\end{equation}

%Now, informally at least, one can compute the time evolution for the distribution of particles $f =f(t,x,v )$ in space $x\in \R^d$ and orientations $v \in \S^{d-1}$ at time $t\geqslant 0$ as the number of particles $N \rightarrow  \infty$  \cite{Degond_Motsch}. The dynamics for the distribution f is given by the following kinetic equation:
%\begin{equation}\label{eq:general kinetic_vicsek}
%	\partial_t f + c\omega \cdot  \nabla_x\cdot\pa{ f} = \sigma \Delta_\omega f +\nu \nabla_\omega \cdot \pa{f \mathbf{P}_{\omega^\bot}(\mathbf{F_l}[f])}.
%\end{equation}

Let us now give informally the mean-field limit as the number of agents $N\to  \infty$ of the system \eqref{eq:syst_part_1} - \eqref{eq:syst_part_2}. To do so, we construct the empirical particle measure $f^N (t)$, given by
\[
f^N (t)(x, \omega) = f^N ( x,\omega,t) = \frac1N \sum_{i=1}^N \delta_{(X_i(t),\omega_i(t))}(x, v),
\]
such that $f^N(0) = f_0^N$. Then $f^N(t)$ is a random measure on $\R^{ d} \times \S^1$ for all $t \ge  0$. For certain types of particle systems, see \cite{2011_BolleyCanCar_Vicsek} e.g., it can be shown that, when $N\to \infty$, $f^N$ converges to a deterministic function $f(x, \omega, t)$ with $x\in \R^{d} , \omega \in \S^1$ and $t > 0$ that satisfies a partial differential equation. In the case of \eqref{eq:interaction_0}, the same result is conjectured: the function $f(x, \omega, t)$ satisfies the following mean field kinetic equation:
\begin{equation}\label{eq:cin_0}
	\partial_t f + \nabla_{x} \cdot (c\, \omega f) = \sigma  \, \Delta_\omega f+ \nu  \nabla_\omega\cdot\Big(P_{\omega^{\perp}}\mathbf{F}[f] f\Big)  ,
\end{equation}
where $F$ denotes the  interaction force:
\begin{equation*}
	\mathbf{F}[f](x,\omega,t) = \int_{y\in B(x,R),w\in\S^{1}}   w\, f(y,w,t)\, dydw.
%	F[f](x,v,t) = \nu \int_{y\in\R^{2},w\in\S^{1}}  K\big(|x-y|,(y-x)\cdot v, (x-y)\cdot w\big)\, w\, f(y,w,t)\, dydw.
\end{equation*}
%We suppose here propagation of chaos. 

Here, we consider the situation where the interaction can be described by a more general kernel than \eqref{eq:interaction_0}.
%\begin{equation}\label{eq:interaction}
%	\bar{\omega}_k = \sum_{j} K \Big( | x_{j} - x_{k}|, (x_{j} - x_{k}) \cdot \omega_{k},  (x_{k} - x_{j}) \cdot \omega_{j}\Big)\, \omega_j.
%\end{equation}

\subsection{Useful formulas for the computation on the sphere}

We recall here some formulas on the sphere $\S^{d-1}$, which will be used in the sequel.

Let 
%$F : \S^{d-1} \to  \R^d$ be a vector-valued function and 
$f,g : \S^{d-1} \to  \R$ be scalar-valued function. The following formula for integration by parts hold true, see \cite{Otto_2008} for details:
%\begin{equation}\label{eq:IPP_1}
%\int_{\S^{d-1}} f \nabla_\omega \cdot  F \:d\omega = - \int_{\S^{d-1}} F \cdot \left( \nabla_\omega f-2\omega f\right) \:d\omega ,
%\end{equation}
%and
%\begin{equation}\label{eq:IPP_2}
%	\begin{cases}
%		\int_{\S^{d-1}} \omega  \nabla_\omega \cdot  F \:d\omega &= - \int_{\S^{d-1}} F  \:d\omega \\
%		\int_{\S^{d-1}}  \nabla_\omega f \:d\omega &= (d-1) \int_{\S^{d-1}} \omega f  \:d\omega, 
%	\end{cases}
%\end{equation}
%from which we deduce
\begin{equation}\label{eq:IPP_3}
	\int_{\S^{d-1}} f \nabla_\omega g \:d\omega = - \int_{\S^{d-1}} g \nabla_\omega f \:d\omega + (d-1) \int_{\S^{d-1}}\omega fg  \:d\omega .
\end{equation}	

Moreover, since $\nabla_{\omega} f$ is a tangent vector-field, from the definition of the projection
$\mathbf{P}_{\omega^{\perp}}$, we deduce that
\begin{equation}\label{eq:IPP_4}
	\begin{cases}
		\mathbf{P}_{\omega^{\perp}} \omega  &=0\\
		\mathbf{P}_{\omega^{\perp}} \nabla_\omega f &= \nabla_\omega f. 
	\end{cases}
\end{equation}

Furthermore, for any constant vector $v \in \R^d$ we have
\begin{equation}\label{eq:IPP_5}
	\begin{cases}
		 \nabla_\omega(\omega \cdot v ) &= \mathbf{P}_{\omega^{\perp}} v\\
		\nabla_\omega \cdot  \left( \mathbf{P}_{\omega^{\perp}} v\right)  &= -(d-1) \omega \cdot v. 
	\end{cases}
\end{equation}

%%%%%%%%%%%%%%%%%%%%%%%%%%%%%%%%%%%%%%%%%%%%%%%%%%%%%%%%%%%%%%%%%%%%%%%%%%%%

\section{Cauchy theory for a linear Fokker-Planck equation}\label{sec:Cauchy linear}

In order to deal with the non-linearity $\mathbf{F_0}[f]$ or $\mathbf{F_1}[f]$ we first establish a $L^\infty_{x,\omega}$ Cauchy theory to an associated linear problem.
% with $\mathbf{F_l}[h]$ for a given function $h$. 
To that purpose we give the following existence result. It is standard in evolution parabolic equation (see for instance \cite[Section 7.1]{Evans}) and we refer specifically here to a result by Degond \cite[Appendix A]{Deg}.

%\bigskip
\begin{theo}\label{theo:cauchy linear}
Let $u_0$ be in $L^2_{x,\omega}$ and $\mathbf{F}$ in $L^\infty_{t,x}W^{1,\infty}_{\omega}$. There 
%for any $r$ in $\br{0,1}$ and any $\eps >0$ there 
exists a unique $u$ in the space
\[
Y = \left\{u \in L^2_{t,x}H^1_\omega, \quad \partial_t u + \omega \cdot \nabla_x u \in L^2_{t,x}H^{-1}_\omega\right\},
\]
to the linear diffential equation, for all $(t,x,\omega) \in \R^+\times \R^d  \times \S^{d-1}$:
\begin{equation}\label{eq:linear}
%\forall (t,x,\omega) \in \R^+\times \R^d  \times \S^{d-1}, \quad 
\partial_t u + \nabla_x\cdot\pa{c\omega u} = \sigma  \Delta_\omega u +\nu \nabla_\omega \cdot \pa{u \mathbf{P}_{\omega^\bot}\mathbf{F}}.
%, \quad (t,x,\omega) \in \R^+\times \R^d  \times \S^{d-1}.
\end{equation}
Moreover, if $u_0$ is non-negative then $u$ is non-negative.
\end{theo}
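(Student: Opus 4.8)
The plan is to treat \eqref{eq:linear} as a \emph{degenerate} parabolic equation — the diffusion acts only in the velocity variable $\omega$, while the $x$-dependence enters solely through the skew-symmetric transport operator $c\,\omega\cdot\nabla_x$ — and to build the solution by a vanishing-viscosity regularization in $x$ followed by a weak-compactness argument, which is exactly the rationale behind the space $Y$. Concretely, for $\eps>0$ I would add an artificial diffusion $\eps\Delta_x$ and solve
\[
\partial_t u^\eps + \nabla_x\cdot\pa{c\omega u^\eps} = \eps\Delta_x u^\eps + \sigma\Delta_\omega u^\eps + \nu\nabla_\omega\cdot\pa{u^\eps\,\mathbf{P}_{\omega^\perp}\mathbf{F}},
\]
which is uniformly parabolic in the full variable $(x,\omega)$. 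With the Gelfand triple $H^1_{x,\omega}\hookrightarrow L^2_{x,\omega}\hookrightarrow H^{-1}_{x,\omega}$, the associated time-dependent bilinear form satisfies a G\aa rding inequality: the transport term is antisymmetric and contributes nothing to $a_\eps(u,u)$ (since $c\int(\omega\cdot\nabla_x u)u=\frac{c}{2}\int\omega\cdot\nabla_x(u^2)=0$), the principal part $\eps\norm{\nabla_x u}^2+\sigma\norm{\nabla_\omega u}^2$ is coercive, and the drift term is absorbed by Young's inequality using $\mathbf{F}\in L^\infty_{t,x}W^{1,\infty}_\omega$. The Lions–Lax–Milgram parabolic existence theorem (as in \cite[Section 7.1]{Evans} and \cite[Appendix A]{Deg}) then yields a unique $u^\eps$.

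Second, I would derive $\eps$-uniform estimates. Testing the regularized equation against $u^\eps$ and absorbing the drift by Young gives, after Grönwall, the bound $\sup_{t\le T}\norm{u^\eps(t)}_{L^2_{x,\omega}}^2 + \sigma\int_0^T\norm{\nabla_\omega u^\eps(t)}_{L^2_{x,\omega}}^2\,dt \le C_T\norm{u_0}_{L^2_{x,\omega}}^2$, together with $\eps\int_0^T\norm{\nabla_x u^\eps}_{L^2}^2\,dt\le C_T$. Hence $u^\eps$ is bounded in $L^2_{t,x}H^1_\omega$ independently of $\eps$, so up to a subsequence $u^\eps\rightharpoonup u$ weakly there. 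Passing to the limit in the linear weak formulation — the viscosity contribution $\eps\int\nabla_x u^\eps\cdot\nabla_x\phi$ vanishing because $\eps\norm{\nabla_x u^\eps}_{L^2}\le\sqrt\eps\,C_T^{1/2}\to0$ — shows that $u$ solves \eqref{eq:linear} weakly. Reading the equation backwards, $\partial_t u+c\,\omega\cdot\nabla_x u=\sigma\Delta_\omega u+\nu\nabla_\omega\cdot\pa{u\,\mathbf{P}_{\omega^\perp}\mathbf{F}}$ lies in $L^2_{t,x}H^{-1}_\omega$, so $u\in Y$; the datum $u(0)=u_0$ is recovered from $u^\eps(0)=u_0$ using that $Y\hookrightarrow C([0,T];L^2_{x,\omega})$, which endows elements of $Y$ with a well-defined trace at $t=0$.

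Uniqueness follows by linearity. The difference $w$ of two solutions lies in $Y$ with $w(0)=0$, and the trace lemma legitimizes the identity $\langle\partial_t w+c\,\omega\cdot\nabla_x w,\,w\rangle=\frac12\frac{d}{dt}\norm{w}_{L^2_{x,\omega}}^2$; the energy estimate then reads $\frac12\frac{d}{dt}\norm{w}^2+\sigma\norm{\nabla_\omega w}^2\le C\norm{w}^2$, whence $w\equiv0$ by Grönwall. Non-negativity is obtained at the regularized level, where the problem is genuinely parabolic: testing against its negative part $(u^\eps)^-$ (whose velocity gradient is $-\mathbf{1}_{u^\eps<0}\nabla_\omega u^\eps$) and using $(u_0)^-=0$ forces $(u^\eps)^-\equiv0$ by Grönwall, and non-negativity is preserved under the weak limit.

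The main obstacle is the degeneracy in $x$: since there is no diffusion in the transport direction, one cannot place $c\,\omega\cdot\nabla_x$ in a standard coercive framework, which is precisely why $Y$ is built around the material derivative $\partial_t+\omega\cdot\nabla_x$ rather than $\partial_t$ alone. The technical heart is therefore the trace and integration-by-parts lemma for the triple defined by $u\in L^2_{t,x}H^1_\omega$ with $\partial_t u+\omega\cdot\nabla_x u\in L^2_{t,x}H^{-1}_\omega$ — the kinetic analogue of the classical Lions–Magenes result — since it simultaneously justifies the energy identity, the uniqueness argument, and the recovery of the initial condition.
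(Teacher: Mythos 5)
Your proposal is correct, but it takes a genuinely different route from the paper. The paper does not prove Theorem \ref{theo:cauchy linear} by regularization at all: it invokes Degond \cite[Appendix A]{Deg}, whose argument applies the abstract variational existence theorem of Lions \cite{Lio} \emph{directly} to the degenerate operator $\partial_t + c\,\omega\cdot\nabla_x - \sigma\Delta_\omega$ (coercive only in the $\omega$-derivatives, with a zeroth-order modification of the equation to gain coercivity), the only adaptation being that the spherical integrations by parts \eqref{eq:IPP_3} produce extra terms in $(d-1)\int \omega fg$ which vanish because $\mathbf{P}_{\omega^\perp}\mathbf{F}$ and $\nabla_\omega$ are orthogonal to $\omega$ --- an observation you make implicitly when you absorb the drift term and when you note that the transport is antisymmetric. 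Your vanishing-viscosity scheme with $\eps\Delta_x$ buys self-containedness: each approximate problem is uniformly parabolic, so only the textbook Galerkin/Lions--Lax--Milgram theory of \cite[Section 7.1]{Evans} is needed, your $\eps$-uniform estimate in $L^\infty_t L^2_{x,\omega}\cap L^2_{t,x}H^1_\omega$ is the right one, the viscous term dies at rate $\sqrt{\eps}$ as you compute, and non-negativity is obtained where the maximum-principle truncation $(u^\eps)^-$ is classical, then passed to the weak limit. What Lions' abstract theorem buys instead is existence directly in the natural degenerate space without any limit passage; on the other hand it does not by itself give uniqueness, so \emph{both} routes ultimately rest on the same kinetic trace and integration-by-parts lemma for $Y$ (the identity $\langle\partial_t w + c\,\omega\cdot\nabla_x w, w\rangle = \frac{1}{2}\frac{d}{dt}\norm{w}^2_{L^2_{x,\omega}}$ and the embedding $Y\hookrightarrow C([0,T];L^2_{x,\omega})$), which you correctly isolate as the technical heart and which is precisely the content established in Degond's appendix; since the justification of that lemma (mollification in $x$ and a Friedrichs-type commutator argument, harmless here because the coefficient $c\,\omega$ is smooth and divergence-free in $x$) is cited rather than reproved in the paper as well, your treatment is at the same level of rigor as the paper's.
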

%\bigskip

Note that in \cite[Appendix A]{Deg}, the velocity space is $\R^d$ and the Fokker-Planck equation is not written with zeroth order terms. However, their proof relies on an abstract result by Lions \cite{Lio} and on a modification of the equation that includes a zeroth order term, that are directly applicable in our context. Indeed the only difference would come from the integration by parts with the tangential derivative $\nabla_\omega$ that generates additional terms, see \eqref{eq:IPP_3},
%:
%$$\int_{\S^{d-1}} f \nabla_\omega g \:d\omega = - \int_{\S^{d-1}} g \nabla_\omega f \:d\omega + (d-1) \int_{\S^{d-1}}fg \omega \:d\omega$$
 in the weak formulation. However, the latter terms vanish as we are looking at the orthogonal part of $\mathbf{F}$ compared to $\omega$.
\par As we discussed above, we would like a $L^\infty_{x,\omega}$ version of the theorem above. We shall thus prove the following result that is a generalized version of \cite[Lemma 3.1]{GamKan} (note that the $L^1$-norm is not accessible). The Lemma \ref{lem:Lp linear} could be indeed adapted for any $p>1$ (up to changing the constants), as the choice of 3/2 is arbitrary (between 1 and 2).

%\bigskip
\begin{lemm}\label{lem:Lp linear}
Let $2\leq p < \infty$, $u_0$ be in $L^p_{x,\omega}$ and $\mathbf{F}$ in $L^\infty_{t,x}W^{1,\infty}_{\omega}$. The solution $u$ to the linear equation \eqref{eq:linear} constructed in Theorem \ref{theo:cauchy linear} is in $L^\infty_t L^p_{x,\omega}$ and gains regularity in $\omega$ since it satisfies
\[
\forall t \geq 0,\quad \norm{u(t)}_{L^p_{x,\omega}} \leq  \norm{u_0}_{L^p_{x,\omega}} \exp\left(C_\infty(\mathbf{F})t\right),
\]
and 
%if $p<\infty$,
\[
\int_0^t\norm{u^{\frac{p-2}{2}}\nabla_\omega u(s)}^2_{L^2_{x,\omega}}ds \leq \frac{1}{p-\frac{3}{2}} \norm{u_0}^p_{L^p_{x,\omega}} \exp\left(p\, C_\infty(\mathbf{F})t\right),
\]
where $C_\infty(\mathbf{F}) = \nu\norm{\mathbf{F}}_{L^\infty_{t,x}W^{1,\infty}_v} \pa{1 + \frac{\nu}{\sigma} \norm{\mathbf{F}}_{L^\infty_{t,x,\omega}}}$.
\end{lemm}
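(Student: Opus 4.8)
The plan is to run the standard $L^p$ energy method directly on \eqref{eq:linear}, multiplying by $p\abs{u}^{p-2}u$ and integrating over $\R^d\times\S^{d-1}$. Since the solution $u$ furnished by Theorem \ref{theo:cauchy linear} only lives in $Y$, the test function $\abs{u}^{p-2}u$ is not \emph{a priori} admissible; I would therefore first run the computation on a truncation $\beta_M(u)$ (a bounded, Lipschitz approximation of $s\mapsto\abs{s}^{p-2}s$), or on smoothed initial data, and pass to the limit by Fatou/monotone convergence at the end. For general sign one works with $\abs{u}$ throughout, the correct signs being guaranteed by Kato's inequality $\Delta_\omega\abs{u}\geq\mathrm{sgn}(u)\Delta_\omega u$. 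This justification — ensuring every integral below is finite and that no boundary contribution survives at spatial infinity — is the main technical obstacle; the algebra itself is short.

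Granting the computation, there are three terms. The transport term vanishes: since $\omega$ does not depend on $x$, $\abs{u}^{p-2}u\,\omega\cdot\nabla_x u=\tfrac1p\,\omega\cdot\nabla_x(\abs{u}^p)$, whose integral over $\R^d$ is zero by the divergence theorem. The diffusion term, after integrating by parts on the boundaryless sphere, produces the favourable dissipation $-p(p-1)\sigma\int \abs{u}^{p-2}\abs{\nabla_\omega u}^2$.

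The drift term is where the constants are produced, and I would keep it in the non-simplified form obtained from the product rule $\nabla_\omega\cdot(u\,\mathbf{P}_{\omega^\bot}\mathbf{F})=\nabla_\omega u\cdot\mathbf{P}_{\omega^\bot}\mathbf{F}+u\,\nabla_\omega\cdot(\mathbf{P}_{\omega^\bot}\mathbf{F})$. The divergence part is bounded pointwise by $C\norm{\mathbf{F}}_{W^{1,\infty}_\omega}$ via the geometric identities \eqref{eq:IPP_5}, contributing the $\nu\norm{\mathbf{F}}_{W^{1,\infty}}$ piece of $C_\infty(\mathbf{F})$. The gradient part I would estimate by Young's inequality, $\abs{u}^{p-1}\abs{\nabla_\omega u}\,\abs{\mathbf{F}}\le\tfrac{\varepsilon}{2}\abs{u}^{p-2}\abs{\nabla_\omega u}^2+\tfrac{1}{2\varepsilon}\abs{u}^{p}\abs{\mathbf{F}}^2$, choosing $\varepsilon=\sigma/\nu$. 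This choice absorbs exactly a weight $\tfrac{p\sigma}{2}\int\abs{u}^{p-2}\abs{\nabla_\omega u}^2$ into the dissipation — lowering its coefficient from $(p-1)$ to $\pa{p-\tfrac32}$, which is precisely the source of the $\tfrac32$ (a different split would leave any number in $(1,2)$, hence the remark) — while the leftover term yields the $\tfrac{\nu^2}{\sigma}\norm{\mathbf{F}}^2_{L^\infty}\le\tfrac{\nu^2}{\sigma}\norm{\mathbf{F}}_{W^{1,\infty}}\norm{\mathbf{F}}_{L^\infty}$ contribution. Collecting everything gives the differential inequality
\[
\frac{d}{dt}\norm{u}_{L^p_{x,\omega}}^p+p\pa{p-\tfrac32}\sigma\int_{\R^d\times\S^{d-1}}\abs{u}^{p-2}\abs{\nabla_\omega u}^2\le p\,C_\infty(\mathbf{F})\,\norm{u}_{L^p_{x,\omega}}^p.
\]

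Finally, since $p\ge2>\tfrac32$ the dissipation term is non-negative, so dropping it and applying Gr\"onwall's lemma gives $\norm{u(t)}_{L^p}^p\le\norm{u_0}_{L^p}^p\exp(p\,C_\infty t)$, which is the first estimate. For the second, I would instead integrate the differential inequality in time while keeping the dissipation: the time integral of $\abs{u}^{p-2}\abs{\nabla_\omega u}^2$ is controlled by $\norm{u_0}_{L^p}^p$ plus $p\,C_\infty\int_0^t\norm{u}_{L^p}^p\,ds$, and substituting the already-proven $L^p$ bound collapses the right-hand side to a multiple of $\norm{u_0}_{L^p}^p\exp(p\,C_\infty t)$. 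Dividing by the retained dissipation coefficient $\pa{p-\tfrac32}$ then produces the gradient estimate with the stated prefactor $\tfrac{1}{p-3/2}$.
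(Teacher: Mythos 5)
Your proposal is correct and follows essentially the same route as the paper's proof: multiply by $\mbox{sgn}(u)\abs{u}^{p-1}$, kill the transport term by the divergence theorem, integrate by parts on the sphere for the dissipation, split the drift via the product rule, and apply Young's inequality with the parameter tuned (your $\varepsilon=\sigma/\nu$ matches the paper's choice of $\eta$) so that exactly $\tfrac{p\sigma}{2}$ of dissipation is sacrificed, yielding the coefficient $p-\tfrac32$ and then both estimates by Gr\"onwall. Your preliminary truncation/Kato step is extra rigor the paper dispenses with (it computes directly on the solution from Theorem \ref{theo:cauchy linear}), and your constant bookkeeping in the gradient estimate is at the same level of precision as the paper's.
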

%\bigskip

\begin{proof}[Proof of Lemma \ref{lem:Lp linear}]
Let $u_0$ be in $L^p_{x,\omega}$ for $p\geq 2$, thus $u_0$ belongs $L^2_{x,\omega}$ so we can use Theorem \ref{theo:cauchy linear} and construct $u$ solution to \eqref{eq:linear}. We compute by integrating by parts
\begin{equation}\label{eq:FKPL_1}
\begin{split}
\frac{1}{p}\frac{d}{dt}\norm{u(t)}^p_{L^p_{x,\omega}} =& \int_{\R^d  \times \S^{d-1}} \mbox{sgn}(u(t,x,v))\abs{u(t,x,v)}^{p-1}\partial_t u(t,x,\omega)\:dxd\omega
\\=& -  \int_{\R^d \times\S^{d-1}}\mbox{sgn}(u)\abs{u}^{p-1}\nabla_x(c\omega u)\:dxd\omega 
\\&\quad-\sigma \int_{\R^d \times \S^{d-1}} \nabla_\omega \pa{\mbox{sgn}(u)\abs{u}^{p-1}}\cdot\nabla_\omega u\:dxd\omega 
\\&\qquad+\nu\int_{\R^d \times \S^{d-1}} \mbox{sgn}(u)\abs{u}^{p-1} \nabla_\omega\cdot \pa{u \mathbf{P}_{\omega^\bot}\mathbf{F}}\:dxd\omega.
\end{split}
\end{equation}
The integration by parts does not generate additional terms since the integrand, $\nabla_\omega$, 
%and $\mathbf{P_{\omega^\bot}}(\mathbf{F})$ - 
is orthogonal to $\omega$.
\par Since $\mbox{sgn}(u)\abs{u}^{p-1}\nabla_x(c\omega u) = \nabla_x(c\omega \abs{u}^p)$, the first term on the right-hand side in \eqref{eq:FKPL_1} is zero. Also, we see that
\[
\nabla_\omega \pa{\mbox{sgn}(u)\abs{u}^{p-1}}\cdot\nabla_\omega u = (p-1)\abs{u}^{p-2} \pa{\nabla_\omega u}^2, 
\]
and
\[
\abs{\abs{u}^{p-1} \nabla_\omega\cdot \pa{u \mathbf{P}_{\omega^\bot}\mathbf{F}}} \leq  \abs{u}^p\norm{\mathbf{F}}_{L^\infty_{t,x}W^{1,\infty}_\omega} + \abs{u}^{p-1}\abs{\nabla_\omega u}\norm{\mathbf{F}}_{L^\infty_{t,x,\omega}}.
\]
Using Cauchy-Schwarz on the third term on the right-hand side in \eqref{eq:FKPL_1}, we end up with
\begin{equation}\label{eq:final ineq}
\begin{split}
\frac{1}{p}\frac{d}{dt}\norm{u(t)}^p_{L^p_{x,\omega}} &\leq -(p-1)\sigma\norm{u^{\frac{p-2}{2}}\nabla_\omega u}_{L^2_{x,\omega}}^2+ \nu \norm{\mathbf{F}}_{L^\infty_{t,x}W^{1,\infty}_\omega}\norm{u(t)}^p_{L^p_{x,\omega}}\\
&\qquad  + \nu\norm{\mathbf{F}}_{L^\infty_{t,x,\omega}}\norm{u}_{L^p_{x,\omega}}^{\frac{p}{2}}\norm{u^{\frac{p-2}{2}}\nabla_\omega u}_{L^2_{x,\omega}}
\\&\leq - \pa{(p-1)\sigma - \frac{\eta \nu\norm{\mathbf{F}}_{L^\infty_{t,x,v}}}{2}}\norm{u^{\frac{p-2}{2}}\nabla_\omega u}^2_{L^2_{x,\omega}} 
\\&\qquad+ \nu\norm{\mathbf{F}}_{L^\infty_{t,x}W^{1,\infty}_\omega}\pa{1+\eta^{-1}}\norm{u(t)}^p_{L^p_{x,\omega}},
\end{split}
\end{equation}
for any $\eta >0$.
\par Choosing $\eta$ sufficiently small such that $\frac{\sigma}{2} - \frac{\eta}{2}\nu\norm{\mathbf{F}}_{L^\infty_{t,x,v}} \geq 0$ we can apply Gr\"onwall lemma and for all $t \geq 0$ get
\begin{equation*}
\begin{split}
%\forall t \geq 0,\quad 
\norm{u(t)}^p_{L^p_{x,\omega}}+ & \left(p-\frac{3}{2}\right)\sigma\int_0^t  \norm{u^{\frac{p-2}{2}}(s)\nabla_\omega u(s)}^2_{L^2_{x,\omega}}  %\pa{e^{p\frac{\nu\norm{\mathbf{F}}_{L^\infty_{t,x}W^{1,\infty}_v}}{\eps^r}(1+\eta^{-1})(t-s)}} 
\\
& \qquad \qquad \qquad \times 
 \exp\left(p\nu\norm{\mathbf{F}}_{L^\infty_{t,x}W^{1,\infty}_v}(1+\eta^{-1})(t-s)\right)
ds \\
&\leq \norm{u_0}_{L^p_{x,\omega}}^p \exp\left( p\nu\norm{\mathbf{F}}_{L^\infty_{t,x}W^{1,\infty}_v}(1+\eta^{-1})t\right).
\end{split}
\end{equation*}
The latter is exactly Proposition \ref{prop:Linfty linear} for $p$ in $[2,+\infty)$.
\end{proof}

We would like to take the limit $p\to+\infty$ in the above Lemma \ref{lem:Lp linear} when starting from an initial data in $L^\infty_{x,\omega}$. This is straightforward in the case of a bounded spatial domain, but, in the unbounded case $\R^d$, it requires that the function $u$ belongs to the space $L^s_{x,\omega}$ for a given $s$. This is the purpose of the following proposition. We emphasize again the importance of the positivity of the initial datum.

\begin{prop}\label{prop:Linfty linear}
Let $u_0$ be in $L^1_{x,\omega} \cap L^\infty_{x,\omega}$ and $\mathbf{F}$ in $L^\infty_{t,x}W^{1,\infty}_{\omega}$. If $u_0 \geq 0$, then the solution $u$ to the linear equation \eqref{eq:linear} constructed in Theorem \ref{theo:cauchy linear} satisfies, for all $t \geq 0$
\[
%\forall t \geq 0, \quad 
\norm{u(t)}_{L^1_{x,\omega}} = \norm{u_0}_{L^1_{x,\omega}},
\]
and
\[
\norm{u(t)}_{L^\infty_{x,\omega}} \le \norm{u_0}_{L^\infty_{x,\omega}} e^{C_\infty(\mathbf{F})t},
\]
where $C_\infty(\mathbf{F})$ was defined in Lemma \ref{lem:Lp linear}.
\end{prop}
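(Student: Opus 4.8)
The plan is to establish the two conclusions by independent arguments: the $L^\infty$ bound will follow from the $p$-independence of the constant in Lemma \ref{lem:Lp linear} by letting $p\to+\infty$, whereas the mass identity will rely on the absence of diffusion in the $x$ variable (a finite speed of propagation effect) together with the positivity of $u$. Throughout I work from the weak formulation of the equation \eqref{eq:linear} satisfied by the solution $u\in Y$ of Theorem \ref{theo:cauchy linear}, which in particular lies in $L^2_{x,\omega}$.

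For the $L^\infty$ estimate, I would first observe that $u_0\in L^1_{x,\omega}\cap L^\infty_{x,\omega}$ forces $u_0\in L^p_{x,\omega}$ for every $2\le p<\infty$, with $\norm{u_0}_{L^p_{x,\omega}}\le\norm{u_0}_{L^\infty_{x,\omega}}^{1-1/p}\norm{u_0}_{L^1_{x,\omega}}^{1/p}$, so that $\norm{u_0}_{L^p_{x,\omega}}\to\norm{u_0}_{L^\infty_{x,\omega}}$. Applying Lemma \ref{lem:Lp linear} for each such $p$ yields $\norm{u(t)}_{L^p_{x,\omega}}\le\norm{u_0}_{L^p_{x,\omega}}e^{C_\infty(\mathbf{F})t}$ with $C_\infty(\mathbf{F})$ \emph{independent of} $p$. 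The conclusion then comes from a Chebyshev/super-level-set argument: for $\lambda>\norm{u_0}_{L^\infty_{x,\omega}}e^{C_\infty(\mathbf{F})t}$ the measure of $\{|u(t)|>\lambda\}$ is bounded by $\pa{\norm{u_0}_{L^p_{x,\omega}}e^{C_\infty(\mathbf{F})t}/\lambda}^p\to0$ as $p\to+\infty$, whence $\norm{u(t)}_{L^\infty_{x,\omega}}\le\norm{u_0}_{L^\infty_{x,\omega}}e^{C_\infty(\mathbf{F})t}$. The only structural input here is that $u(t)$ sits in a fixed $L^s_{x,\omega}$ with finite $s$ (namely $s=2$), which guarantees that the super-level sets have finite measure and that the limit captures the essential supremum.

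For the mass identity I would test the equation against a purely spatial cutoff $\chi_R(x)$, equal to $1$ on $B_R$, supported in $B_{2R}$, with $\norm{\nabla\chi_R}_{L^\infty}\le C/R$. Since $\chi_R$ does not depend on $\omega$, both sphere terms integrate to zero ($\S^{d-1}$ is closed), and only transport survives: $\frac{d}{dt}\int u\chi_R\,dxd\omega=\int c\,\omega\,u\cdot\nabla_x\chi_R\,dxd\omega$. The difficulty is that this flux is a priori only controlled by the mass in the annulus $B_{2R}\setminus B_R$, which need not be small since $u(t)$ is so far only known in $L^2\cap L^\infty$ on the unbounded set $\R^d$. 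I resolve this by finite speed of propagation: replacing $\chi_R$ by a moving cutoff $\zeta(s,x)=\Theta(\rho+c(t-s)-|x|)$ with $\Theta$ a nondecreasing mollified Heaviside, the same computation gives $\frac{d}{ds}\int u(s)\zeta(s)=-c\int u\,\Theta'\pa{1+\omega\cdot\tfrac{x}{|x|}}\,dxd\omega\le0$, because $u\ge0$, $\Theta'\ge0$ and $1+\omega\cdot\frac{x}{|x|}\ge0$. Integrating in $s$ and letting $\Theta\uparrow\mathbf1$ yields $\int_{B_\rho\times\S^{d-1}}u(t)\le\int_{B_{\rho+ct}\times\S^{d-1}}u_0\le\norm{u_0}_{L^1_{x,\omega}}$, and $\rho\to+\infty$ shows $u(t)\in L^1_{x,\omega}$ with $\norm{u(t)}_{L^1_{x,\omega}}\le\norm{u_0}_{L^1_{x,\omega}}$. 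Knowing now that $u(t)\in L^1_{x,\omega}$ uniformly, I return to the static cutoff: the flux is bounded by $\frac{cC}{R}\int_0^t\int_{B_{2R}\setminus B_R}\int_{\S^{d-1}}u\to0$ as $R\to+\infty$, so $\int u(t)=\int u_0$, which with positivity is exactly $\norm{u(t)}_{L^1_{x,\omega}}=\norm{u_0}_{L^1_{x,\omega}}$.

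The main obstacle is precisely this last point: preserving the $L^1$ norm on an unbounded spatial domain in the absence of any regularizing $x$-diffusion. The argument hinges on two features of the model—diffusion acts only on the compact variable $\omega$, so the sphere terms are conservative and the $x$-characteristics travel at the bounded speed $c$; and $u$ is nonnegative, so the boundary flux generated by the cutoff carries a favorable sign. Neither an $L^p$ bound alone nor a naive static cutoff suffices, which is why the positivity of the initial datum is essential here.
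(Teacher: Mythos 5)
Your proof is correct and follows the same two-part skeleton as the paper's proof --- the $L^\infty$ bound obtained by letting $p\to+\infty$ in Lemma \ref{lem:Lp linear}, and the mass identity obtained from the divergence structure together with the positivity of $u$ --- but you execute both steps by arguments that differ in substance. For the $L^\infty$ part, the paper simply asserts that the limit $p\to+\infty$ is legitimate ``since $u(t)$ also belongs to $L^1_{x,\omega}$'', which makes the sup bound logically dependent on first proving mass conservation; your Chebyshev/super-level-set argument extracts $\norm{u(t)}_{L^\infty_{x,\omega}}\le\norm{u_0}_{L^\infty_{x,\omega}}e^{C_\infty(\mathbf{F})t}$ directly from the $p$-uniformity of $C_\infty(\mathbf{F})$ and the interpolation bound on $\norm{u_0}_{L^p_{x,\omega}}$, needing no integrability of $u(t)$ beyond the $L^p$ estimates themselves, so in your write-up the two conclusions are independent. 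For the mass identity, the paper differentiates $\norm{u(t)}_{L^1_{x,\omega}}$ and integrates by parts formally, implicitly assuming both that $u(t)\in L^1_{x,\omega}$ and that the spatial flux vanishes at infinity --- neither of which is automatic for the $Y$-solution of Theorem \ref{theo:cauchy linear} on the unbounded domain $\R^d$ (the paper only supplies a careful cutoff computation later, in Step 1 of Section \ref{sec:Cauchy global}, for the nonlinear scheme). Your moving cutoff $\Theta(\rho+c(t-s)-|x|)$, exploiting $u\ge0$, $\Theta'\ge0$ and $1+\omega\cdot x/|x|\ge0$, first establishes $u(t)\in L^1_{x,\omega}$ with $\norm{u(t)}_{L^1_{x,\omega}}\le\norm{u_0}_{L^1_{x,\omega}}$ by finite speed of propagation, after which the static cutoff gives equality since the annular flux is $O\pa{t\norm{u_0}_{L^1_{x,\omega}}/R}$; this genuinely repairs the gap rather than reproducing the paper's formal step. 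The only point you should make explicit is the admissibility of time-dependent, $\omega$-independent test functions in the weak formulation for $u\in Y$ (justified by a routine mollification, since $\partial_t u+\omega\cdot\nabla_x u\in L^2_{t,x}H^{-1}_\omega$), and that the sphere terms vanish because $u\,\mathbf{P}_{\omega^\bot}\mathbf{F}$ is a tangent field on the closed manifold $\S^{d-1}$, which is exactly the orthogonality observation the paper uses.
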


\begin{proof}[Proof of Proposition \ref{prop:Linfty linear}]
Thanks to Theorem \ref{theo:cauchy linear} the non-negativity of $u_0$ implies that $u$ is non-negative. Hence,
\begin{eqnarray*}
\frac{d}{dt}\norm{u(t)}_{L^1_{x,\omega}} &=& \frac{d}{dt}\int_{\R^d\times \S^{d-1}} u(t,x,\omega)\:dxd\omega 
\\&=& \int_{\R^d\times\S^{d-1}}\nabla_x\cdot(c\omega u) + \nabla_\omega\cdot\pa{\sigma\nabla_\omega u - \nu u \mathbf{P_{\omega^\bot}}\mathbf{F}}dxd\omega \\&=& 0
\end{eqnarray*}
because, again, the integration by parts do not generate additional terms since the integrands, $\nabla_\omega$ and $\mathbf{P_{\omega^\bot}}(\mathbf{F})$, are orthogonal to $\omega$. This concludes the preservation of the $L^1_{x,\omega}$-norm.
\par The $L^\infty_{x,\omega}$ estimate follows directly from Lemma \ref{lem:Lp linear} by taking the limit when $p$ goes to $+\infty$ since $u(t)$ also belongs to $L^1_{x,\omega}$ for all $t\geq 0$.
\end{proof}

%%%%%%%%%%%%%%%%%%%%%%%%%%%%%%%%%%%%%%%%%%%%%%%%%%%%%%%%%%%%%%%%%%%%%%%%%%%%

\section{
%Local-in-time Cauchy theory for the non-linear equations. 
Proof of Theorem \ref{theo: short time cauchy}}\label{sec:Cauchy nonlinear}

This section is devoted to the proof of existence and uniqueness for short times for the nonlinear problems \eqref{eq:general kinetic_0}-\eqref{eq:integration kernel_0} and \eqref{eq:general kinetic_1}-\eqref{eq:integration kernel_1}. We begin with the existence issue. 
%Recall that $\mathbf{F_0}[u]$ and $\mathbf{F_1}[u]$ are defined by \eqref{eq:integration kernel_0} -- \eqref{hyp:K} and \eqref{eq:integration kernel_1} -- \eqref{hyp:K}.

%\bigskip
%\begin{prop}\label{prop:local cauchy nonlinear}
%Let $u_0 \geq 0$ belong to $L^1_{x,\omega}\cap L^\infty_{x,\omega}$. There exists $T_{\max} >0$ such that there exists a unique $u$ in $L^\infty\pa{[0,T_{\max}^0);L^1_{x,\omega}\cap L^\infty_{x,\omega}}$ weak solution to \eqref{eq:general kinetic_0}-\eqref{eq:integration kernel_0} or \eqref{eq:general kinetic_1}-\eqref{eq:integration kernel_1}
%%\begin{equation}\label{eq:local cauchy nonlinear}
%%\partial_t u + \nabla_x\cdot\pa{c\omega u} = \sigma  \Delta_\omega u + \nu \nabla_\omega \cdot \pa{u \mathbf{P}_{\omega^\bot}(\mathbf{F_r}[u])}
%%\end{equation}
%with initial datum $u_0$. Moreover, $u$ is non-negative.
%\end{prop}
%\bigskip

%\begin{proof}[Proof of Proposition \ref{prop:local cauchy nonlinear}]
The proof will follow an iterative scheme. We choose $u^{(0)}(t,x,\omega) = u_0(x,\omega)$ and construct the following iterative scheme where $u^{(n+1)}$ is the solution to
\begin{equation}\label{eq:iteration}
\partial_t u^{(n+1)} + \nabla_x\cdot\pa{c\omega u^{(n+1)}} = \sigma  \Delta_\omega u^{(n+1)} + \nu \nabla_\omega \cdot \pa{u^{(n+1)} \mathbf{P}_{\omega^\bot}\pa{\mathbf{F_i}[u^{(n)}]}},
\end{equation}
on $\R^+\times \R^d  \times \S^{d-1}$, with initial datum $u_0$ and for $i\in \{0,1\}$. By definition of $\mathbf{F_i}[u^{(n)}]$,  for all $t \leq T$, we see that
\begin{equation}\label{eq:norm Flg}
\abs{\mathbf{F_i}[u^{(n)}]}([0,T],x,\omega) + \abs{\nabla_\omega \mathbf{F_i}[u^{(n)}]}([0,T],x,\omega) \leq K_{\infty,i} \left\|u^{(n)}\right\|_{L^\infty_{[0,T],x,\omega}}, 
\end{equation}
with 
\[
K_{\infty,0} = \norm{K}_{L^1_{x}W^{1,\infty}_\omega L^1_{\omega_*}}
\]
and 
\[
K_{\infty,1}=\norm{k}_{W^{1,\infty}_\omega L^1_{\omega_*}}. 
\]
We conclude that $\mathbf{F_i}[u^{(n)}]$ belongs to $L^\infty_{[0,T],x}W^{1,\infty}_\omega$ whenever $u^{(n)}$ belongs to $L^\infty_{[0,T],x,\omega}$.

\bigskip
\noindent\textbf{Step 1: Well-posedness of $\pa{u^{(n)}}_{n\in\N}$  and boundedness for short times.}
Suppose that $u^{(n)}$ is non-negative and belongs to the space $L^1_{x,\omega}\cap L^\infty_{t,x,\omega}$ (which is satisfied by $u^{(0)}$) then thanks to \eqref{eq:norm Flg}, Theorem \ref{theo:cauchy linear} and Proposition \ref{prop:Linfty linear}, $u^{(n+1)}$ is  well-defined, non-negative and belongs to $L^1_{x,\omega}\cap L^\infty_{t,x,\omega}$ and for all $0\leq t \leq T$, it satisfies
\[
\norm{u^{(n+1)}(t)} \leq \norm{u_0}_{L^\infty_{x,\omega}} \exp\left(C_{\infty,i}\norm{u^{(n)}}_{L^\infty_{[0,T],x,\omega}}T\right),
\]
with 
\begin{equation}\label{def:Cinfty}
C_{\infty,i} = \nu K_{\infty,i}(1+\frac{\nu}{\sigma}K_{\infty,i}).
\end{equation}

Therefore, for any $R > 1$, if we set $T_{R,i} = \frac{\mbox{ln}(R)}{RC_{\infty,i}\norm{u_0}_{L^\infty_{x,\omega}}}$, we obtain that
\[
\pa{\norm{u^{(n)}}_{L^\infty_{[0,T_{R,i}],x,\omega}} \leq R\norm{u_0}_{L^\infty_{x,\omega}}} \quad  \Longrightarrow \quad  \pa{\norm{u^{(n+1)}}_{L^\infty_{[0,T_{R,i}],x,\omega}} \leq R\norm{u_0}_{L^\infty_{x,\omega}}},
\]
hence $\pa{u^{(n)}}_{n\in\N}$ is bounded in $L^\infty\pa{[0,T_{R,i}],L^\infty_{x,\omega}}$.

\bigskip
\noindent\textbf{Step 2: Weak-* compactness of $\mathbf{\pa{u^{(n)}}_{n\in\N}}$ and limit equation for short times.} First of all the first step taught us that the sequence $\pa{u^{(n)}}_{n\in\N}$ is bounded in $L^\infty_{[0,T_{R,i}],x,\omega}$ but also that for almost every $(t,x)$ in $[0,T_{R,i}]\times\R^d$, the sequence $\pa{u^{(n)}(t,x)}_{n\in\N}$ is bounded in $L^\infty_\omega$. Therefore, we can first extract a subsequence, still denoted $\pa{u^{(n)}(t,x)}_{n\in\N}$, such that for almost every $(t,x)$, the sequence $\pa{u^{(n)}(t,x)}_{n\in\N}$ converges weakly-* towards $u(t,x,\cdot)$ in $L^\infty_\omega$. Then, for almost every $t$ in $[0,T_{R,i}]$ this subsequence is still bounded in $L^\infty_{x,\omega}$ and as such converges, up to a subsequence that we still denote $\pa{u^{(n)}}_{n\in\N}$, weakly-* in $L^\infty_{x,\omega}$ and from the first almost everywhere convergence its weak-* limit is the function $u(t,\cdot,\cdot)$. Finally, the sequence $\pa{u^{(n)}(t,x)}_{n\in\N}$ being bounded in $L^\infty_{[0,T_{R,i}],x,\omega}$ we can extract a subsequence, still denoted $\pa{u^{(n)}(t,x)}_{n\in\N}$, that converges weakly-* in $L^\infty_{[0,T_{R,i}],x,\omega}$ towards $u$. As a recap: there exists $u \in L^\infty_{[0,T_{R,i}],x,\omega}$
\begin{equation}\label{eq:weak* CV}
\begin{split}
%\exists u \in L^\infty_{[0,T_{R,r}],x,\omega},\quad \forall (t,x) \in [0,T_{R,r}]\times\R^d,\quad  
u_n(t,x) \overset{\ast}{\rightharpoonup} u(t,x), &\mbox{ a.e. in}\quad L^\infty_{\omega},
\\
%\forall t \in [0,T_{R,r}],\quad  
u_n(t) \overset{\ast}{\rightharpoonup} u(t) &\mbox{ a.e. in}\quad L^\infty_{x,\omega},
\\u_n \overset{\ast}{\rightharpoonup} u &\mbox{ a.e. in}\quad L^\infty_{[0,T_{R,i}],x,\omega}.
\end{split}
\end{equation}
We look at the weak formulation of \eqref{eq:iteration}. Multiplying \eqref{eq:iteration} by any function $\phi$ in $C^\infty_c([0,T_{R,i})\times\R^d\times\S^{d-1})$  and integrating by parts, it yields
\begin{equation*}
\begin{split}
&-\int_{\R^d\times\S^{d-1}} u_0(x,\omega)\phi(0,x,\omega)\:dxd\omega \\
&= \int_0^t\int_{\R^d\times\S^{d-1}} u^{(n+1)}\cro{\partial_t\phi + c\omega\cdot\nabla_x\phi-\sigma\Delta_\omega\phi}\:dsdxd\omega 
\\& \qquad + \nu\int_0^t\int_{\R^d\times\S^{d-1}}u^{(n+1)}\mathbf{P_{\omega^\bot}}\pa{\mathbf{F_i}[u^{(n)}]}\cdot\nabla_\omega \phi \:dsdxd\omega.
\end{split}
\end{equation*}
Thanks to the weak-* convergence of $\pa{u^{(n)}}_{n\in\N}$ in $L^\infty_{[0,T_{R,i}],x,\omega}$, the first integral on the right-hand side tends to 
$$\int_0^t\int_{\R^d\times\S^{d-1}} u\cro{\partial_t\phi + c\omega\cdot\nabla_x\phi-\sigma \Delta_\omega\phi}\:dsdxd\omega.$$
It remains to deal with the non-linearity. However, owing to the definitions \eqref{eq:integration kernel_0} and \eqref{eq:integration kernel_1}, we have
\[
\mathbf{P_{\omega^\bot}}\pa{\mathbf{F_0}[u^{(n)}]}(s,x,\omega) = \int_{\R^d\times\S^{d-1}}\mathbf{P_{\omega^\bot}}\pa{\mathbf{K}}(x,\omega,x_*,\omega_*)u^{(n)}(s,x_*,\omega_*)\:dx_*d\omega_* ,
\]
and
\[
\mathbf{P_{\omega^\bot}}\pa{\mathbf{F_1}[u^{(n)}]}(s,x,\omega) = \int_{\S^{d-1}}\mathbf{P_{\omega^\bot}}\pa{\mathbf{k}}(\omega,\omega_*)u^{(n)}(s,x,\omega_*)\:dx_*d\omega_*, 
\]
to the almost everywhere weak-* convergences of $\pa{u^{(n)}}_{n\in\N}$ in $L^\infty_{x,\omega}$ (for the case $i=0$) and in $L^{\infty}_\omega$ (for the case $i=1$), it follows that in every case $\mathbf{P_{\omega^\bot}}\pa{\mathbf{F_i}[u^{(n)}]}$ converges almost everywhere towards $\mathbf{P_{\omega^\bot}}\pa{\mathbf{F_i}[u]}(s,x,\omega)$.  Thanks to Egorov's theorem on sets with finite measure we have
\begin{equation*}
\begin{split}
&\forall r >0,\:\forall \eps >0,\: \exists X_{r,\eps} \subset [0,T_{R,r}]\times B_x(0,r) \times \S^{d-1},\:\mbox{such that}
\\&\abs{\left([0,T_{R,r}]\times B_x(0,r) \times \S^{d-1}\right)\backslash X_{r,\eps}} \leq \eps,\\
 &\mbox{and}\quad \mathbf{P_{\omega^\bot}}\pa{\mathbf{F_r}[u^{(n)}]} \to \mathbf{P_{\omega^\bot}}\pa{\mathbf{F_r}[u]}(s,x,\omega) \:\mbox{uniformly in $X_{r,\eps}$}.
\end{split}
\end{equation*}
Since $\phi$ belongs to $C^\infty_c\left([0,T_{R,i})\times\R^d\times\S^{d-1}\right)$ we can choose $r$ such that $\phi=0$ outside $[0,T_{R,i}]\times B(0,r)\times \S^{d-1}$. Thus, for $n$ large enough by the weak-* convergence of $u^{(n+1)}$, we see that
\begin{equation*}
\begin{split}
&\Bigg|\int_0^t\int_{\R^d\times\S^{d-1}}u^{(n+1)}\mathbf{P_{\omega^\bot}}\pa{\mathbf{F_i}[u^{(n)}]}\cdot\nabla_\omega \phi \:dsdxd\omega \\
& \qquad \qquad - \int_0^t\int_{\R^d\times\S^{d-1}}u\mathbf{P_{\omega^\bot}}\pa{\mathbf{F_i}[u]}\cdot\nabla_\omega \phi \:dsdxd\omega\Bigg|
\\&\leq \abs{\int_0^t\int_{B(0,r)\times\S^{d-1}}\pa{u^{(n+1)} -u)}\mathbf{P_{\omega^\bot}}\pa{\mathbf{F_i}[u]}\cdot\nabla_\omega \phi \:dsdxd\omega} 
\\&\qquad \qquad + \abs{\int_0^t\int_{B(0,r)\times\S^{d-1}}u^{(n+1)}\pa{\mathbf{P_{\omega^\bot}}\pa{\mathbf{F_i}[u^{(n)}]}-\mathbf{P_{\omega^\bot}}\pa{\mathbf{F_i}[u]}}\cdot\nabla_\omega \phi \:dsdxd\omega}
\\&\leq \eps + R\norm{u_0}_{L^\infty_{x,\omega}}\norm{\nabla_\omega\phi}_{L^\infty_{[0,T_{R,i}],x,\omega}}\\
& \qquad \qquad  \times \int_{[0,T_{R,i}]\times B(0,r)\times\S^{d-1}}\abs{\mathbf{P_{\omega^\bot}}\pa{\mathbf{F_i}[u^{(n)}]}-\mathbf{P_{\omega^\bot}}\pa{\mathbf{F_i}[u]}}\:dsdxd\omega
\\&\leq \eps+ R\norm{u_0}_{L^\infty_{x,\omega}}\norm{\nabla_\omega\phi}_{L^\infty_{[0,T_{R,i}],x,\omega}}\abs{X_{r,\eps}}\norm{\mathbf{P_{\omega^\bot}}\pa{\mathbf{F_i}[u^{(n)}]}-\mathbf{P_{\omega^\bot}}\pa{\mathbf{F_i}[u]}}_{L^\infty_{X_{r,\eps}}}
\\&\quad\quad+R\norm{u_0}_{L^\infty_{x,\omega}}\norm{\nabla_\omega\phi}_{L^\infty_{[0,T_{R,i}],x,\omega}} \abs{B(0,r)\backslash X_{r,\eps}}2RK_{\infty,i}\norm{u_0}_{L^\infty_{x,\omega}},
\end{split}
\end{equation*}
where we have used that $\norm{u^{(n)}}_{L^\infty_{[0,T_{R,i}],x,\omega}}\leq R\norm{u_0}_{L^\infty_{x,\omega}}$ and the inequality \eqref{eq:norm Flg} to bound the $\mathbf{F_i}$ term. The uniform convergence on $X_{r,\eps}$ shows that for $n$ large enough the above quantity is bounded by
\[
\eps\pa{1 + Rr^d\norm{u_0}_{L^\infty_{x,\omega}}\norm{\nabla_x\phi}_{L^\infty_{[0,T_{R,i}],x,\omega}}+ 2R^2K_{\infty,i}\norm{u_0}^2_{L^\infty_{x,\omega}}\norm{\nabla_x\phi}_{L^\infty_{[0,T_{R,i}],x,\omega}}},
\]
which shows that the nonlinear term converges too. Finally, in the limit $n$ goes to $+\infty$ we obtain
\begin{equation*}
\begin{split}
-\int_{\R^d\times\S^{d-1}} u_0(x,\omega)\phi(0,x,\omega)\:dxd\omega =& \int_0^t\int_{\R^d\times\S^{d-1}} u\cro{\partial_t\phi + c\omega\cdot\nabla_x\phi-\sigma\Delta_\omega\phi}\:dsdxd\omega 
\\&+ \nu\int_0^t\int_{\R^d\times\S^{d-1}}u\mathbf{P_{\omega^\bot}}\pa{\mathbf{F_i}[u]}\cdot\nabla_\omega \phi \:dsdxd\omega,
\end{split}
\end{equation*}
hence $u$ is a weak solution to \eqref{eq:general kinetic_0}-\eqref{eq:integration kernel_0} or \eqref{eq:general kinetic_1}-\eqref{eq:integration kernel_1}.

To establish uniqueness of the solutions described above we show continuity with respect to initial data.

\begin{prop}\label{prop:continuity initial data}
Let $u_{0,1},u_{0,2}$ be two non-negative functions in $L^1_{x,\omega} \cap L^\infty_{x,\omega}$ and $l$ in $\br{0,1}$. Suppose $u_1,u_2$ are solutions to \eqref{eq:general kinetic_0}-\eqref{eq:integration kernel_0} or \eqref{eq:general kinetic_1}-\eqref{eq:integration kernel_1} defined on $L^\infty\pa{[0,T_{\max});L^1_{x,\omega}\cap L^\infty_{x,\omega}}$ associated to the initial datum $u_{0,1}$ and $u_{0,2}$ respectively.
\\Then, for any $T$ in $(0,T_{\max})$ and all $p$ in $[2,+\infty)$, there exists $C(p,T)>0$ such that
\[
\forall t \in [0,T],\quad \norm{u_1(t)-u_2(t)}_{L^p_{x,\omega}} \leq e^{C(p,T)t}\norm{u_{0,1}-u_{0,2}}_{L^\infty_{x,\omega}}.
\]
Note that the constant $C(p,T)$ depends on $\norm{u_1}_{L^\infty_{[0,T],x,\omega}}$, $\norm{u_2}_{L^\infty_{[0,T],x,\omega}}$ and $\nu$. 
\end{prop}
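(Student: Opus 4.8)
The plan is to run an $L^p$ energy estimate on the difference $w:=u_1-u_2$, mimicking the proof of Lemma \ref{lem:Lp linear}. Subtracting the two weak formulations, $w$ solves an equation with the same transport and diffusion structure, whose nonlinear flux is $u_1\mathbf{P}_{\omega^\bot}\pa{\mathbf{F_i}[u_1]}-u_2\mathbf{P}_{\omega^\bot}\pa{\mathbf{F_i}[u_2]}$. Exploiting that $\mathbf{F_i}$ is linear in its argument, I would split this flux as
\[
u_1\mathbf{P}_{\omega^\bot}\pa{\mathbf{F_i}[u_1]}-u_2\mathbf{P}_{\omega^\bot}\pa{\mathbf{F_i}[u_2]} = w\,\mathbf{P}_{\omega^\bot}\pa{\mathbf{F_i}[u_1]}+u_2\,\mathbf{P}_{\omega^\bot}\pa{\mathbf{F_i}[w]},
\]
that is, into a term carrying the difference $w$ against the \emph{fixed} force $\mathbf{F_i}[u_1]$, and a genuinely new term in which the force itself is $\mathbf{F_i}[w]$.

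First I would treat the part $w\,\mathbf{P}_{\omega^\bot}\pa{\mathbf{F_i}[u_1]}$ exactly as in Lemma \ref{lem:Lp linear}. Since $u_1\in L^\infty_{[0,T],x,\omega}$, the bound \eqref{eq:norm Flg} shows $\mathbf{F_i}[u_1]\in L^\infty_{[0,T],x}W^{1,\infty}_\omega$ with norm at most $K_{\infty,i}\norm{u_1}_{L^\infty_{[0,T],x,\omega}}$, so after multiplying the equation for $w$ by $\mbox{sgn}(w)\abs{w}^{p-1}$ and integrating, this contribution is controlled by $\nu\norm{\mathbf{F_i}[u_1]}_{W^{1,\infty}_\omega}\norm{w}^p_{L^p_{x,\omega}}+\nu\norm{\mathbf{F_i}[u_1]}_{L^\infty_{x,\omega}}\norm{w}^{\frac{p}{2}}_{L^p_{x,\omega}}\norm{\abs{w}^{\frac{p-2}{2}}\nabla_\omega w}_{L^2_{x,\omega}}$. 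As in \eqref{eq:final ineq}, the last cross term is absorbed by Young's inequality into the good dissipation term $-(p-1)\sigma\norm{\abs{w}^{\frac{p-2}{2}}\nabla_\omega w}^2_{L^2_{x,\omega}}$ coming from the Laplacian; the transport term vanishes and the integration by parts in $\omega$ produces no curvature contribution since the integrands are tangent to the sphere.

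The heart of the argument is the new term $\nu\int \mbox{sgn}(w)\abs{w}^{p-1}\nabla_\omega\cdot\pa{u_2\mathbf{P}_{\omega^\bot}\mathbf{F_i}[w]}\,dxd\omega$. Integrating by parts in $\omega$ moves the derivative onto $\mbox{sgn}(w)\abs{w}^{p-1}$, giving $-\nu(p-1)\int \abs{w}^{p-2}\nabla_\omega w\cdot u_2\,\mathbf{P}_{\omega^\bot}\mathbf{F_i}[w]\,dxd\omega$. Bounding $\abs{u_2}\le\norm{u_2}_{L^\infty_{[0,T],x,\omega}}$ and using Cauchy--Schwarz to split off $\abs{w}^{\frac{p-2}{2}}\nabla_\omega w$, this is at most
\[
\nu(p-1)\norm{u_2}_{L^\infty_{x,\omega}}\,\norm{\abs{w}^{\frac{p-2}{2}}\nabla_\omega w}_{L^2_{x,\omega}}\,\norm{\abs{w}^{\frac{p-2}{2}}\mathbf{F_i}[w]}_{L^2_{x,\omega}},
\]
whose first factor is again absorbed into the dissipation. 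Everything thus reduces to the key estimate $\norm{\abs{w}^{\frac{p-2}{2}}\mathbf{F_i}[w]}^2_{L^2_{x,\omega}}=\int\abs{w}^{p-2}\abs{\mathbf{F_i}[w]}^2\,dxd\omega\le C(K_{\infty,i})\norm{w}^p_{L^p_{x,\omega}}$.

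This last bound is the main obstacle, and the reason the statement excludes $p=+\infty$. For $i=1$ I would write $\abs{\mathbf{F_1}[w](x,\omega)}^2\le \norm{k(\omega,\cdot)}_{L^1_{\omega_*}}\int\abs{k(\omega,\omega_*)}\abs{w(x,\omega_*)}^2\,d\omega_*$ by Cauchy--Schwarz against the $L^1_{\omega_*}$ kernel, and then redistribute the powers by Young's inequality, $\abs{w(x,\omega)}^{p-2}\abs{w(x,\omega_*)}^2\le\frac{p-2}{p}\abs{w(x,\omega)}^p+\frac{2}{p}\abs{w(x,\omega_*)}^p$, closing the $\omega,\omega_*$ integrals via the $L^1$-integrability of $k$ and the compactness of $\S^{d-1}$; the case $i=0$ is identical, with an additional spatial convolution handled by Young's inequality in $x$ thanks to $K\in L^1_x$. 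The delicate point is that only an $L^1_{\omega_*}$ (not $L^\infty$) control of the kernel is available, so no pointwise $L^\infty_\omega$ bound on $\mathbf{F_i}[w]$ can be closed and one cannot pass to $p\to+\infty$. Collecting all terms yields $\frac{d}{dt}\norm{w(t)}^p_{L^p_{x,\omega}}\le p\,C(p,T)\norm{w(t)}^p_{L^p_{x,\omega}}$, with $C(p,T)$ depending on $\nu$, $\sigma$, $K_{\infty,i}$ and $\sup_{[0,T]}\norm{u_j}_{L^\infty_{x,\omega}}$; Grönwall's lemma then gives the claimed continuous dependence, and choosing $u_{0,1}=u_{0,2}$ forces $w\equiv 0$, which establishes uniqueness.
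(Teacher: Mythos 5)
Your proposal is correct and its skeleton coincides with the paper's: an $L^p$ energy estimate on $w=u_1-u_2$ modeled on Lemma \ref{lem:Lp linear}, a bilinear splitting of the flux, absorption of the cross terms into the dissipation, and Gr\"onwall. The paper splits symmetrically via the identity $2(ab-cd)=(a-c)(b+d)+(a+c)(b-d)$, producing $\frac{\nu}{2}\nabla_\omega\cdot\pa{w\,\mathbf{P}_{\omega^\bot}\mathbf{F_i}[u_1+u_2]}$ and $\frac{\nu}{2}\nabla_\omega\cdot\pa{(u_1+u_2)\mathbf{P}_{\omega^\bot}\mathbf{F_i}[w]}$; your asymmetric split $w\,\mathbf{F_i}[u_1]+u_2\,\mathbf{F_i}[w]$ is an immaterial variant. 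The genuine divergence is the key control of the $\mathbf{F_i}[w]$ term: after the same integration by parts and Cauchy--Schwarz, the paper applies one more H\"older inequality to reduce to $\norm{\mathbf{P}_{\omega^\bot}\pa{\mathbf{F_i}[w]}}_{L^p_{x,\omega}}\leq K_{i,p}\norm{w}_{L^p_{x,\omega}}$ with $K_{0,p}=\norm{\mathbf{K}}_{L^q_{x,x_*,\omega,\omega_*}}$ and $K_{1,p}=\norm{\mathbf{k}}_{L^q_{\omega,\omega_*}}$, $q=p'$, whereas you estimate $\int\abs{w}^{p-2}\abs{\mathbf{F_i}[w]}^2\,dxd\omega$ directly by Cauchy--Schwarz against the kernel followed by Young's redistribution of powers, i.e.\ a Schur-test argument, with Young's convolution inequality in $x$ for $i=0$. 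What each buys: your route exploits the convolution structure and $\mathbf{K}\in L^1_x$, which is closer to the stated hypothesis \eqref{def:reg_W} than the paper's $L^q_{x,x_*}$ norm; on the other hand your second Young piece, carrying $\frac{2}{p}\abs{w(x,\omega_*)}^p$, requires the transpose (column) bound $\sup_{\omega_*}\int_{\S^{d-1}}\abs{\mathbf{k}(\omega,\omega_*)}\,d\omega<\infty$, which, exactly like the paper's $L^q$ kernel norms, is not literally implied by $\mathbf{k}\in W^{1,\infty}_\omega L^1_{\omega_*}$ but holds for the model kernels \eqref{hyp:K} --- so both proofs implicitly strengthen the kernel integrability in comparable ways, and neither is strictly more general. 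Your closing observation that this mechanism degenerates as $p\to+\infty$ matches the paper's remark that continuity cannot be reached in $L^\infty_{x,\omega}$; note also that your Gr\"onwall argument naturally yields $\norm{u_{0,1}-u_{0,2}}_{L^p_{x,\omega}}$ on the right-hand side, which is the natural norm there (the $L^\infty_{x,\omega}$ norm in the statement appears to be a slip in the paper, as an $L^p$ bound on $\R^d\times\S^{d-1}$ does not follow from an $L^\infty$ one).
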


\begin{rema}
Note that uniqueness only comes in the space $L^\infty\pa{[0,T_{\max,i});L^1_{x,\omega}\cap L^\infty_{x,\omega}}$ and does not require $u\geq 0$ nor preservation of mass. Moreover, our method prevents from having continuity in $L^\infty_{x,\omega}$ because of the dependencies of the parameter in $\sup\limits_{[0,T]}\norm{u(t)}_{L^\infty_{x,\omega}}$.
\end{rema}

\begin{proof}[Proof of Proposition \ref{prop:continuity initial data}]
Let us fix $T$ in $(0,T_{\max,i})$ then $u_1$ and $u_2$ belong to $L^\infty_{[0,T],x,\omega}$ and let us denote 
$$M = \max\br{\norm{u_1}_{L^\infty_{[0,T],x,\omega}}; \norm{u_2}_{L^\infty_{[0,T],x,\omega}}}.$$
Moreover, $u_1$ and $u_2$ being in $L^1_{x,\omega}\cap L^\infty_{x,\omega}$ they also belong to $L^2_{x,\omega}$. Hence we compute for all $t$ in $[0,T]$,
\begin{equation*}
\begin{split}
\partial_t\cro{u_1-u_2} + \nabla_x\cdot \pa{c\omega \cro{u_1-u_2}} =& \sigma \Delta_\omega \cro{u_1-u_2} \\
&+  \frac{\nu}{2}\nabla_\omega\cdot\pa{\cro{u_1-u_2}\mathbf{P}_{\omega^\bot}\pa{\mathbf{F_i}[u_1+u_2]}}
\\& +\frac{\nu}{2}\nabla_\omega\cdot \pa{\cro{u_1+u_2}\mathbf{P}_{\omega^\bot}\pa{\mathbf{F_i}[u_1-u_2]}},
\end{split}
\end{equation*}
owing to the algebraic identity : $2(ab-cd) = (a-c)(b+d) + (a+c)(b-d)$. Since $u_1(t)$ and $u_2(t)$ belong to $L^1_{x,\omega} \cap L^\infty_{x,\omega}$ they also belong to $L^p_{x,\omega}$ for any $p\geq 1$.

Performing similar computations as those leading to \eqref{eq:final ineq} in the proof of  Lemma \ref{lem:Lp linear}, we obtain
\begin{equation}\label{eq:uniqueness start}
\begin{split}
\frac{1}{p}&\frac{d}{dt}\norm{u_1-u_2}^p_{L^p_{x,\omega}} 
\\&\leq -\pa{(p-1)\sigma - \eta C(M, \nu)}\norm{(u_1-u_2)^{\frac{p-2}{2}}\nabla_\omega(u_1-u_2)}_{L^2_{x,\omega}}^2\\
& \quad+ C(M, \nu,\eta)\norm{u_1-u_2}^p_{L^p_{x,\omega}} + \frac{\nu}{2}\Big\|\int_{\R^d\times\S^{d-1}}\mbox{sgn}(u_1-u_2)\abs{u_1-u_2}^{p-1} \times
\\&\qquad \qquad \qquad \qquad \qquad \nabla_\omega\cdot\cro{(u_1+u_2)\mathbf{P}_{\omega^\bot}\pa{\mathbf{F_i}[u_1-u_2]}}dxd\omega\Big\|.
\end{split}
\end{equation}
Here and after we denote by $C(a,b)$ any positive constant depending only on the quantities $a$ and $b$. 

We integrate by parts the last term, as usual there are no extra terms thanks to the orthogonality of the integrand, and then we use the Cauchy-Schwarz inequality followed by the H\"older one:
\begin{eqnarray}
&&\abs{\int_{\R^d\times\S^{d-1}}\mbox{sgn}(u_1-u_2)\abs{u_1-u_2}^{p-1} \nabla_\omega\cdot\cro{(u_1+u_2)\mathbf{P}_{\omega^\bot}\pa{\mathbf{F_i}[u_1-u_2]}}dxd\omega} \nonumber
\\&& = (p-1)\abs{\int_{\R^d\times\S^{d-1}}(u_1+u_2)\abs{u_1-u_2}^{p-2}\nabla_\omega\cro{u_1-u_2}\cdot \mathbf{P}_{\omega^\bot}\pa{\mathbf{F_i}[u_1-u_2]}dxd\omega} \nonumber
\\&& \leq 2M(p-1)\norm{(u_1-u_2)^{\frac{p-2}{2}}\nabla_\omega(u_1-u_2)}_{L^2_{x,\omega}} \nonumber\\
&& \qquad \qquad \times \cro{\int_{\R^d\times \S^{d-1}}\abs{u_1-u_2}^{p-2}\pa{\mathbf{P}_{\omega^\bot}\pa{\mathbf{F_i}[u_1-u_2]}}^2dxd\omega}^{\frac{1}{2}} \nonumber
\\&&\leq 2M(p-1)\norm{(u_1-u_2)^{\frac{p-2}{2}}\nabla_\omega(u_1-u_2)}_{L^2_{x,\omega}}\nonumber \\
&& \qquad \qquad  \times  \norm{u_1-u_2}_{L^p_{x,\omega}}^{\frac{p-2}{2}}\norm{\mathbf{P}_{\omega^\bot}\pa{\mathbf{F_i}[u_1-u_2]}}_{L^p_{x,\omega}} . \label{eq:control 2nd term}
\end{eqnarray}
The term $\norm{\mathbf{P}_{\omega^\bot}\pa{\mathbf{F_i}[u_1-u_2]}}_{L^p_{x,\omega}}$ must be controlled differently depending on the value of $i$. Nevertheless, in both cases it is a kernel operator and an H\"older inequality yields that
\[
\norm{\mathbf{P}_{\omega^\bot}\pa{\mathbf{F_i}[u_1-u_2]}}_{L^p_{x,\omega}} \leq K_{i,p}\norm{u_1-u_2}_{L^p_{x,\omega}},
\]
with $K_{0,p}= \norm{\mathbf{K}}_{L^q_{x,x_*,\omega,\omega_*}}$ and $K_{1,p}= \norm{\mathbf{k}}_{L^q_{\omega,\omega_*}}$, where $q$ is the conjugate exponent of $p$. Using it inside \eqref{eq:control 2nd term} and plugging it back into \eqref{eq:uniqueness start} yields
\begin{equation*}
\begin{split}
&\frac{d}{dt}\norm{u_1-u_2}^p_{L^p_{x,\omega}} \\
&\leq -p\pa{(p-1)\sigma - \eta C(M, \nu)}\norm{(u_1-u_2)^{\frac{p-2}{2}}\nabla_\omega(u_1-u_2)}_{L^2_{x,\omega}}^2\\
&\quad+ pC(M, \nu,\eta)\norm{u_1-u_2}^p_{L^p_{x,\omega}} \\
&\quad+ p(p-1)MK_{i,p}\nu \norm{(u_1-u_2)^{\frac{p-2}{2}}\nabla_\omega(u_1-u_2)}_{L^2_{x,\omega}}\norm{u_1-u_2}_{L^p_{x,\omega}}^{\frac{p}{2}}
\\&\leq \pa{\eta C(M, \nu,i,p)-\sigma p(p-1)}\norm{(u_1-u_2)^{\frac{p-2}{2}}\nabla_\omega(u_1-u_2)}_{L^2_{x,\omega}}^2 
\\&\quad+ C(M, \nu,i,p,\eta)\norm{u_1-u_2}^p_{L^p_{x,\omega}}.
\end{split}
\end{equation*}
Choosing $\eta$ sufficiently small, the first term on the right-hand side is negative and we conclude thanks to Gr\"onwall lemma.
\end{proof}

%%%%%%%%%%%%%%%%%%%%%%%%%%%%%%%%%%%%%%%%%%%%%%%%%%%%%%%%%%%%%%%%%%%%%%%%%%%%

\section{Proof of Proposition \ref{prop:Global nonlinear}}\label{sec:Cauchy global}

\textbf{Step 1: Preservation of mass.} The key ingredient of the present proof is to establish conservation of mass when passing to the limit in the iterative scheme used in the proof of Theorem \ref{theo: short time cauchy}. More precisely, we recall that the sequence $\pa{u^{(n)}}_{n\in\N}$ is defined by
\[
\partial_t u^{(n+1)} + \nabla_x\cdot\pa{c\omega u^{(n+1)}} = \sigma \Delta_\omega u^{(n+1)} + \nu \nabla_\omega \cdot \pa{u^{(n+1)} \mathbf{P}_{\omega^\bot}\pa{\mathbf{F_0}[u^{(n)}]}}.
\]
Using Proposition, \ref{prop:Linfty linear} we know that for all $n$, $u^{(n)}\geq 0$ and that $\norm{u^{(n)}(t)}_{L^1_{x,\omega}} = M_1$ for all $t\geq 0$.
\par Consider $\chi_R(x,\omega)$ a smooth test function such that $0\leq \chi_R\leq 1$ with $\chi_R =1$ on $B(0,R)$, $\chi_R =0$ outside $B(0,R+1)$. We can choose $\chi_R$ such that $\abs{\nabla_{x,\omega} \chi_R} \leq C_d$ and $\abs{\Delta_{x,\omega}\chi_R} \leq C_d$ where $C_d>0$ only depends on the dimension and is independent of $R$. From the constructive Theorem \ref{theo:cauchy linear} we also recall that $u^{(n)}$ belongs to $L^2_{t,x}H^1_\omega$ and that $\partial_t u^{(n)} + \omega\cdot\nabla_x u^{(n)}$ belongs to $L^2_{t,x}H^{-1}_\omega$. We therefore use the weak formulation of the iterative scheme with $\chi_R$ and we get that for any $t\geq 0$,
\begin{equation*}
\begin{split}
&\abs{\int_{\R^d\times\S^{d-1}} u^{(n+1)}(t,x,\omega)\chi_R(x,\omega)dxd\omega - \int_{\R^d\times\S^{d-1}} u_0(x,\omega)\chi_R(x,\omega)dxd\omega} 
\\&\quad\quad= \Bigg|\int_0^t\int_{\R^d\times\S^{d-1}} u^{(n+1)}\bigg\{\partial_t\chi_R + c\omega\cdot\nabla_x\chi_R - \sigma\nabla_\omega\chi_R \\
&\qquad \qquad +\nu \mathbf{P_{\omega^\bot}}\pa{\mathbf{F_0}(u^{(n)})}\cdot\nabla_\omega\chi_R\bigg\}dxd\omega \Bigg|
\\&\quad\quad \leq \pa{cC_d+\sigma C_d + \nu C_d\norm{\mathbf{P_{\omega^\bot}}\pa{\mathbf{F_0}(u^{(n)})}}_{L^\infty_{[0,t],x,\omega}}}\\
& \qquad \qquad \times \int_0^t\int_{B(0,R+1)\backslash B(0,R)}\abs{u^{(n+1)}}dxd\omega.
\end{split}
\end{equation*}
Since the mass is preserved we can evaluate $\norm{\mathbf{P_{\omega^\bot}}\pa{\mathbf{F_0}(u^{(n)})}}_{L^\infty_{[0,t],x,\omega}}$:
\begin{equation}\label{eq:control K M1}
\begin{split}
&\abs{\mathbf{P_{\omega^\bot}}\pa{\mathbf{F_0}[u^{(n)}]}(s,x,\omega)} \\
&\qquad \leq \int_{\R^d\times\S^{d-1}}\abs{\mathbf{P_{\omega^\bot}}\pa{\mathbf{K}}(x,\omega,x_*,\omega_*)}u^{(n)}(s,x_*,\omega_*)\:dx_*d\omega_*
\\&\qquad\leq \norm{\mathbf{K}}_{L^\infty_{x,\omega,x_*,\omega_*}}M_1
\end{split}
\end{equation}
which is time independent. Hence, there exists $C$ independent of time and $R$ such that
\begin{equation*}
\begin{split}
&\abs{\int_{\R^d\times\S^{d-1}} u^{(n+1)}(t,x,\omega)\chi_R(x,\omega)dxd\omega - \int_{\R^d\times\S^{d-1}} u_0(x,\omega)\chi_R(x,\omega)dxd\omega} 
\\&\quad\quad\quad \leq C\int_0^t\int_{B(0,R+1)\backslash B(0,R)}\abs{u^{(n+1)}}dxd\omega.
\end{split}
\end{equation*}
Thanks to the weak-* convergence of $\pa{u^{(n)}}_{n\in\N}$ towards $u$ on a given $[0,T]$ (up to a subsequence, see Step 2 of the proof of Theorem \ref{theo: short time cauchy}) we can take the limit as $n$ goes to $+\infty$ and obtain
$$\abs{\int_{\R^d\times\S^{d-1}} u\:\chi_R\: dxd\omega - \int_{\R^d\times\S^{d-1}} u_0\:\chi_R\:dxd\omega} \leq C\int_0^t\int_{B(0,R+1)\backslash B(0,R)}u \: dxd\omega.$$
To conclude we first invoke the dominated convergence theorem ($u$ and $u_0$ are positive and integrable and $\chi_R$ is bounded by one) to take the limit $R$ goes to $+\infty$ and see that the left-hand side converges to $\abs{\int u - \int u_0}$. For the right-hand side we also invoke the dominated convergence theorem to see that it tends to $0$ as $R$ tends to $+\infty$, because $u(t)$ is integrable. At last we showed that
$$\forall t \in [0,T], \: \int_{\R^d\times\S^{d-1}}u(t,x,\omega)dxd\omega = M_1.$$

\textbf{Step 2: Global solution and estimate.} In Step 1 we built a solution to \eqref{eq:general kinetic_0}-\eqref{eq:integration kernel_0} on $[0,T_{\max,0})$ and thanks to the uniqueness of Proposition \ref{prop:continuity initial data} it follows that the solution associated to $u_0$ on $[0,T]$ preserves the mass. Let us now prove that $T_{\max,0} = +\infty$.
\\Using Proposition \ref{prop:Linfty linear} we have the following implicit control over $u$:
\[
\forall t \in [0,T_{\max,0}),\quad \norm{u(t)}_{L^{\infty}_{x,\omega}} \leq \norm{u_0}_{L^\infty_{x,\omega}} e^{C_\infty\pa{\mathbf{P_{\omega^\bot}}(\mathbf{F_0(u)})}t},
\]
where
\begin{eqnarray*}
C_\infty\pa{\mathbf{P_{\omega^\bot}}(\mathbf{F_0(u)})} &=& \nu \norm{\mathbf{P_{\omega^\bot}}(\mathbf{F_0(u)})}_{L^\infty_{t,x}W^{1,\infty}_\omega}\pa{1+\frac{\nu}{\sigma}\norm{\mathbf{P_{\omega^\bot}}(\mathbf{F_0(u)})}_{L^\infty_{t,x,\omega}}}
\\&\leq& \nu \norm{\mathbf{K}}_{L^\infty_{x,x_*,\omega_*}W^{1,\infty}_\omega}M_1\pa{1+\frac{\nu}{\sigma}\norm{\mathbf{K}}_{L^\infty_{x,x_*,\omega_*,\omega}}M_1}.
\end{eqnarray*}
The estimate directly comes from \eqref{eq:control K M1} applied to $\mathbf{K}$ and $\nabla_\omega\mathbf{K}$. We thus see that we have a constant $C_\infty >0$ only depending on the parameter of the subject : $\nu$, $\sigma$, $\mathbf{K}$ and $M_1$ such that
\[
\forall t \in [0,T_{\max,0}),\quad \norm{u(t)}_{L^{\infty}_{x,\omega}} \leq \norm{u_0}_{L^\infty_{x,\omega}} e^{C_\infty t}.
\]
This prevents $u$ from $\lim\limits_{t\to T_{\max,0}^-}\norm{u(t)}_{L^\infty_{x,\omega}} = +\infty$ unless $T_{\max,0}=+\infty$. The solution $u$ is therefore globally defined in time.

\section{Proof of Proposition \ref{prop:approximate r=1}}\label{sec:approx}

First of all, since $u$ is a weak solution to \eqref{eq:general kinetic_0}-\eqref{eq:integration kernel_0}, $u^{(\eps)}$ is a weak solution to
\[
\partial_t u^{\eps} + \nabla_x\cdot\pa{c\omega u^{\eps}} = \frac{\sigma}{\eps} \Delta_\omega u^{\eps} +  \frac{\nu}{\eps}  \nabla_\omega \cdot \cro{u^\eps \mathbf{P}_{\omega^\bot}(\mathbf{F_0}[u])\pa{\frac{t}{\eps},\frac{x}{\eps},\omega}}.
\]
The result will hold true if 
\[
R_\eps(t,x,\omega) = \nabla_\omega \cdot \cro{u^\eps \mathbf{P}_{\omega^\bot}(\mathbf{F_0}[u])\pa{\frac{t}{\eps},\frac{x}{\eps},\omega}} - \nabla_\omega \cdot \cro{u^\eps \mathbf{P}_{\omega^\bot}(\mathbf{F_1}[u^\eps])}
\]
 satisfies the desired estimate.
\\Take a function $\phi$ in $C^\infty_c([0,+\infty)\times\R^d\times\S^{d-1})$ and, using integration by parts, compute
\begin{equation*}
\begin{split}
&\abs{\int_0^t\int_{\R^d\times\S^{d-1}}R_\eps(s,x,\omega)\phi(s,x,\omega)dsdxd\omega} 
\\&\quad\quad\quad= \abs{\int_0^t\int_{\R^d\times\S^{d-1}}u^\eps\nabla_\omega\phi \cdot \cro{\mathbf{P}_{\omega^\bot}(\mathbf{F_0}[u])\pa{\frac{s}{\eps},\frac{x}{\eps},\omega} - \mathbf{P}_{\omega^\bot}(\mathbf{F_1}[u^\eps])}dsdxd\omega}
\\&\quad\quad\quad \leq \int_0^t\int_{\R^d\times\S^{d-1}}u^\eps\abs{\nabla_\omega\phi} \abs{\mathbf{F_0}[u]\pa{\frac{s}{\eps},\frac{x}{\eps},\omega} - \mathbf{F_1}[u^\eps]}dsdxd\omega.
\end{split}
\end{equation*}
We thus study
\begin{eqnarray*}
&&\abs{\mathbf{F_0}[u]\pa{\frac{s}{\eps},\frac{x}{\eps},\omega} - \mathbf{F_1}[u^\eps])}
\\&& = \int_{\S^{d-1}}\cro{ \pa{\int_{\R^d}\mathbf{K}\pa{\frac{x}{\eps} -x_*, \omega,\omega_*} u^\eps\pa{s,\eps x_*,\omega_*}dx_*} - \mathbf{k}(\omega,\omega_*)u^\eps(s,x,\omega_*) }d\omega_*
\\&&= \int_{\S^{d-1}}\cro{ \pa{\int_{\R^d}\mathbf{K}\pa{x_*, \omega,\omega_*} u^\eps\pa{s,x-\eps x_*,\omega_*}dx_*} - \mathbf{k}(\omega,\omega_*)u^\eps(s,x,\omega_*) }d\omega_*
\\&& = \int_{\S^{d-1}}\int_{\R^d}\mathbf{K}\pa{x_*, \omega,\omega_*} \cro{u^\eps\pa{s,x - \eps x_*,\omega_*}-u^\eps(s,x,\omega_*)}dx_*.
\end{eqnarray*}

We emphasize that this is the first time where we actually use the property that $\mathbf{F_0}$ is a convolution operator in the spatial variable, see \eqref{eq:integration kernel_0}.
\\ Now, $\phi(t,x,\omega)$ and $\mathbf{K}(x_*,\omega, \omega_*)$ are of compact support in respectively $x$ and $x_*$, independently of $t$, $\omega$ and $\omega_*$. Let us so fix $R>0$ such that
\[
\mbox{Supp}(\phi(t,\cdot,\omega)) \subset B_x(0,R) \quad\mbox{and}\quad \mbox{Supp}(\mathbf{K}(\cdot,\omega,\omega_*)) \subset B_{x_*}(0,R).
\]
Thanks to the previous computations we obtain
\begin{equation*}
\begin{split}
&\abs{\int_0^t\int_{\R^d\times\S^{d-1}}R_\eps(s,x,\omega)\phi(s,x,\omega)dsdxd\omega} 
\\&\leq C_{\phi,u^\eps}\int_0^t\int_{\S^{d-1}\times B_x(0,R) \times B_{x_*}(0,R)}\abs{u^{\eps}(s,x-\eps x_*,\omega_*) - u^\eps(s,x,\omega_*)}dsdxdx_*d\omega d\omega_*.
\end{split}
\end{equation*}
where $C_{\phi,u^\eps} = \abs{S^{d-1}}\norm{\mathbf{K}}_{L^\infty_{x_*,\omega,\omega_*}}\norm{u^\eps}_{L^\infty_{t,x,\omega}}\norm{\phi}_{L^\infty_{t,x,\omega}}=O(\eps)$ since $u^\eps$ is $C^1$ by using the finite increments. 
%\nm{The result then follows by recalling that $\nu = \nu_0\eps^\eta$ and $\sigma = \sigma_0 \eps^\eta$ with $\eta >0$. The last lines of the proof are confusing and should be rewritten.}
%\important{Malheureusement ici je pensais que c'est un $o(\eps)$ alors que si $u^\eps$ est $C^1$ les accroissements finis ne donnent que $O(\eps)$ et donc cela ne tend plus vers $0$ une fois qu'on divise par $\eps$...}
%\par\important{EN REVANCHE : SI ON CHOISIT $\nu = \nu_0\eps^\eta$ pour tout $\eta >0$ alors nous obtiendrons bien une limite nulle ! Dans ce cas malheureusement nous aurons $\sigma \Delta_\omega$ qui donnera uniquement de la diffusion à la limite. Si on prend $\sigma = \sigma_0 \eps^\eta$ alors on retrouvera notre balance...}

%\newpage
%\bibliographystyle{acm}
%\bibliography{Briant_Meunier}

\end{document}